 \newtheorem{rem}{Remark}[section]
\newtheorem{theorem}{Theorem}[section]
\newtheorem{corollary}{Corollary}[section]
\newtheorem{lemma}{Lemma}[section]
\newtheorem{definition}{Definition}[section]
\numberwithin{equation}{section}
\numberwithin{table}{section}
\def\ad#1{\begin{aligned}#1\end{aligned}}  \def\b#1{\mathbf{#1}}
\def\a#1{\begin{align*}#1\end{align*}} \def\an#1{\begin{align}#1\end{align}}
\def\e#1{\begin{equation}#1\end{equation}} 
\def\p#1{\begin{pmatrix}#1\end{pmatrix}} 
  \numberwithin{equation}{section}
\def\boxit#1{\vbox{\hrule height1pt \hbox{\vrule width1pt\kern1pt
     #1\kern1pt\vrule width1pt}\hrule height1pt}\ }
 \def\lab#1{\boxit{\small #1}\label{#1}}
  \def\mref#1{\boxit{\small #1}\ref{#1}}
 \def\meqref#1{\boxit{\small #1}\eqref{#1}}
\long\def\comment#1{}
  \def\lab#1{\label{#1}} \def\mref#1{\ref{#1}} \def\meqref#1{\eqref{#1}}
\def\half{\frac{1}{2}}
\begin{document}

\baselineskip=14pt
\title{
  On the optimal convergence rate of a Robin-Robin
      domain decomposition method }


\author{Wenbin Chen\address{School of Mathematical Sciences,
  Fudan University, Shanghai, \break
   200437, China, {\tt wbchen@fudan.edu.cn} }
 \and
 Xuejun XU\address{LSEC, Institute of Computational Mathematics,
  Academy of Mathematics and System Sciences,
  Chinese Academy of Sciences, P.O. Box 2719, Beijing, 100190, China,
 {\tt
xxj@lsec.cc.ac.cn} } \and
 Shangyou Zhang \address{Department of Mathematical Sciences,
   University of Delaware, \break
  Newark, Delaware 18716, U.S.A., {\tt
szhang@udel.edu}}}

\begin{abstract}
  In this work, we solve a long-standing open problem:
  Is it true
  that the convergence rate of the Lions' Robin-Robin nonoverlapping
  domain decomposition(DD) method can be constant,
   independent of the mesh size $h$?
  We closed this twenty-year old problem with
    a positive answer.
  Our theory is also verified by numerical tests.

\end{abstract}

\maketitle

 \vskip 15pt

\noindent{\bf AMS subject classifications.}
    { 65N30, 65M60.}


     \vskip 15pt

\noindent{\bf Keywords.}\quad 
  Finite element, Robin-Robin domain decomposition method, convergence rate

\pagestyle{myheadings} \thispagestyle{plain} 
\markboth{W. Chen, X. Xu and S. Zhang}{Domain decomposition}

\section{Introduction}

 \def\udn{\frac{\partial u}{\partial \b n }}
 \def\wdn{\frac{\partial w}{\partial \b n }}

Domain decomposition (DD) methods are important tools for
   solving partial differential equations,
   especially by parallel computers.
In this paper, we shall
 study a class of nonoverlapping DD method, which is
  based on using Robin-Robin boundary conditions as transmission conditions on
  the subdomain interface. The idea of
  employing Robin-Robin coupling conditions in DD methods
   was first proposed by P.L. Lions in \cite{lions}.
In the past twenty years, there are many works on the analysis and
   applications of this DD method:
\comment{
  Despres \cite{Despres} (1991),
  Douglas and Huang \cite{DH,DH1} (1997, 1998),
  Deng \cite{deng2,deng3} (1997,2003),
  Du \cite{Du} (2001),
  Gander {\sl et al.} \cite{Gander2,Gander3} (2001,2002, 2003),
  Guo and Hou \cite{GH} (2003),
   Discacciati \cite{Discacciati} (2004),
  Flauraud and Nataf \cite{Nataf1} (2006),
  Gander \cite{Gander01, Gander02} (2006, 2007),
  Qin and Xu\cite{QX1,QX2,QX3} (2006, 2008),
  Discacciati {\sl et al.} \cite{DQV2007} (2007),
  Lui \cite{Lui1} (2009),
  Chen {\sl et al.} {\cite{chen1,chen2}} (2010, 2011). }
  Despres \cite{Despres},
  Douglas and Huang \cite{DH,DH1},
  Deng \cite{deng2,deng3},
  Du \cite{Du},
  Gander {\sl et al.} \cite{Gander2,Gander3},
  Guo and Hou \cite{GH},
   Discacciati \cite{Discacciati},
  Flauraud and Nataf \cite{Nataf1},
  Gander \cite{Gander01, Gander02},
  Qin and Xu \cite{QX1,QX2,QX3},
  Discacciati {\sl et al.} \cite{DQV2007},
  Lui \cite{Lui1}, and
  Chen {\sl et al.} \cite{chen1,chen2}.
We should say that the list is far from being complete.



By comparison with other DD methods, Lions' DD method has several
  advantages. The iterative procedure is simple and
      much more highly parallel than others.
Because it employs Robin conditions, the method is specially suitable for
   solving Helmholtz and time-harmonic Maxwell equations.
There exists a lot of works in this direction,
   cf. \cite{Despres, BF, Gander3, DGG} for details.

Lions' Robin-Robin DD method was proposed in 1990 \cite{lions}, see Definition
    \mref{add} below (without Step 5).
The convergence (without any rate) is shown in \cite{lions,QV}.
Later, the convergence was improved to a geometric convergence
  \cite{DH1,DH,GH}, i.e, a rate of $1-O(h)$.
It was first pointed out by
  Gander, Halpern and Nataf in \cite{Gander2} that the optimal choice
  of relaxation parameter is $\gamma=O(h^{-1/2})$
   and the convergence rate $1-O(\sqrt h)$ could be achieved.
Recently, Xu and Qin \cite{XQ} give a rigorous
    analysis on this result and shows that the rate is
    asymptotically sharp.
However, without enough knowledge on the method,  the two parameters
   $\gamma_1$ and $\gamma_2$ in Lions' DD method are set equal,
   see Definition \mref{add} below, by researchers in   above
   references.
Thus, the rate of $1-O(\sqrt h)$ is generally believed optimal for the Lions'
  DD method.

This paper answers this long-standing open problem:
 Is it possible to achieve a rate of $1-C$ for some some constant $C>0$
   independent of the mesh size $h$?
We give a positive answer.
Yes, the constant rate of convergence is achieved by well-choosing three
   parameters in the Robin-Robin DD method, $\gamma_1$, $\gamma_2$ and
   $\theta$, in Definition \mref{add}.
Roughly speaking,  the optimal choices are
   \a{ \gamma_1 = O(1), \ \gamma_2=O(h^{-1}), \ \hbox{ and } \
     \theta = \frac {2t-1}{2t+1}, }
  where $t\approx 1$ is the ratio of spectral radii of two
   Dirichlet-Neumann operators on two subdomains.
It is shown in this paper, by three types of analysis, 
    that the error reduction rate of the DD method is optimal,
   $1-C$.

Next, we introduce
the Robin-Robin DD method through a simple model problem.
  We solve the following  model problem in 2D,
which is decomposed into two subproblems (cf. Figure \mref{two-d}):
\e{\lab{laplace} \left\{ \ad{
     -\Delta u & = f \quad &&\hbox{ in } \Omega_1, \\
      u & = 0 \quad &&\hbox{ on } \partial\Omega\cap
            \partial \Omega_1, \\
     u-w &= \udn - \wdn =0
       \quad &&\hbox{ on } \Gamma, \\
     -\Delta w & = f \quad &&\hbox{ in } \Omega_2, \\
      u & = 0 \quad &&\hbox{ on } \partial\Omega\cap
            \partial \Omega_2, } \right. }
where $\Gamma$ is an interface separating $\Omega_1$ and $\Omega_2$,
    and $\b n$ is an outward normal vector of $\Omega_1$ at $\Gamma$.
The DD method can be applied to  general elliptic
   PDEs, general domains and multiple subdomains, cf. \cite{deng2,QV}.


 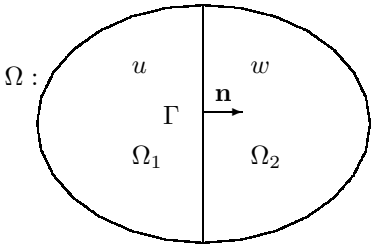
\begin{figure}[htb] \setlength{\unitlength}{1.5pt} \begin{center}
 \def\bd{\begin{picture}(  30.,  42.)(  0.,  0.)
     \def\la{\vrule width.4pt height.4pt}
  \multiput(  72.00,  30.00)( -0.026,  0.165){ 40}{\la}
  \multiput(  70.95,  36.67)( -0.073,  0.150){ 42}{\la}
  \multiput(  67.84,  43.01)( -0.110,  0.125){ 45}{\la}
  \multiput(  62.84,  48.70)( -0.136,  0.097){ 49}{\la}
  \multiput(  56.20,  53.45)( -0.152,  0.068){ 52}{\la}
  \multiput(  48.24,  57.02)( -0.162,  0.040){ 54}{\la}
  \multiput(  39.37,  59.24)( -0.166,  0.013){ 56}{\la}
  \multiput(  30.02,  60.00)( -0.166, -0.013){ 56}{\la}
  \multiput(  20.68,  59.25)( -0.162, -0.040){ 54}{\la}
  \multiput(  11.80,  57.04)( -0.152, -0.068){ 52}{\la}
  \multiput(   3.84,  53.47)( -0.136, -0.097){ 49}{\la}
  \multiput(  -2.81,  48.73)( -0.110, -0.125){ 45}{\la}
  \multiput(  -7.82,  43.04)( -0.074, -0.150){ 42}{\la}
  \multiput( -10.94,  36.71)( -0.026, -0.165){ 40}{\la}
  \multiput( -12.00,  30.03)(  0.026, -0.165){ 40}{\la}
  \multiput( -10.96,  23.36)(  0.073, -0.150){ 42}{\la}
  \multiput(  -7.86,  17.02)(  0.110, -0.125){ 45}{\la}
  \multiput(  -2.87,  11.33)(  0.135, -0.097){ 48}{\la}
  \multiput(   3.77,   6.57)(  0.152, -0.068){ 52}{\la}
  \multiput(  11.72,   2.99)(  0.162, -0.041){ 54}{\la}
  \multiput(  20.59,   0.76)(  0.166, -0.014){ 56}{\la}
  \multiput(  29.93,   0.00)(  0.166,  0.013){ 56}{\la}
  \multiput(  39.27,   0.74)(  0.162,  0.040){ 54}{\la}
  \multiput(  48.15,   2.95)(  0.152,  0.068){ 52}{\la}
  \multiput(  56.12,   6.51)(  0.136,  0.097){ 49}{\la}
  \multiput(  62.78,  11.25)(  0.110,  0.125){ 45}{\la}
  \multiput(  67.80,  16.93)(  0.074,  0.149){ 42}{\la}
  \multiput(  70.93,  23.26)(  0.026,  0.165){ 40}{\la}
 \end{picture}}
    \begin{picture}(60,60)(0,0)
    \put(30,0){\line(0,1){60}}
    \put(0,0){\bd}
     \put(12,20){$\Omega_1$}  \put(42,20){$\Omega_2$}
     \put(12,43){$u$}  \put(42,43){$w$}
     \put(20,30){$\Gamma$}  \put(30,33){\vector(1,0){10}}
       \put(33,36){$\b n$}
    \put(-20,40){$\Omega:$}
 \end{picture}\end{center}
\caption{A domain is decomposed into two subdomains. }
\lab{two-d}
\end{figure}
  \def\udn{\frac{\partial u}{\partial \b n }}
 \def\wdn{\frac{\partial w}{\partial \b n }}

The Dirichlet and Neumann interface conditions on $\Gamma$
   in \meqref{laplace} are
  combined into two Robin interface conditions:
   \an{\lab{i1}
       \gamma_1  u +  \udn &= \gamma_1  w + \wdn =g_1
                        &&\hbox{ on } \Gamma, \\
      \lab{i2}   \gamma_2  u - \udn &=\gamma_2 w -  \wdn=g_2
             &&\hbox{ on } \Gamma.}
 Here we allow $\gamma_1, \gamma_2$ to be  any positive constants.
 For example, when $\gamma_1$ is arbitrarily close to   zero and
	$\gamma_2$ is close to
    infinity (but the linear systems would become near singular),
  the method would be reduced to the Dirichlet-Neumann DD method.
 The past researchers all set $\gamma_1=\gamma_2=\gamma$ in
   the Robin interface conditions, i.e., the two parameters are 
    simultaneously large or small.
 By selecting two parameters correctly, using the
   original Lions' DD method,
  this Robin-Robin domain decomposition method should be better
   than all existing  Dirichlet-Neumann,
     Neumann-Neumann and  Robin-Robin domain decomposition
    methods.

 Let $V_i=H_0^1(\Omega)|_{\Omega_i}$.  Later, $V_i$ also denotes
  the restriction of the
     finite element space of grid size $h$ on the two subdomains
     $\Omega_i$.
  By \meqref{i1}, we do an integration by parts on $\Omega_1$ to get
   \a{ \int_\Gamma  g_1 v ds
       & =  \int_\Gamma  (  \udn + \gamma_1 u)  v ds
          =  \int_{\Omega_1} (\nabla u\cdot\nabla v+
      \Delta u v) d\b x + \gamma_1 \int_\Gamma u  v ds \\
    & =   \int_{\Omega_1} (\nabla u\cdot\nabla v -f v)
        d\b x + \gamma_1 \int_\Gamma u  v ds.  }
  Thus
 \a{ a_1( u, v) + \gamma_1 \langle u,v\rangle
       & =  (f, v)_{\Omega_1} +  \langle g_1,v\rangle
    \quad \forall v\in V_1,
    }
  where \a{ a_i(u,v) &= \int_{\Omega_i} \nabla u \cdot\nabla v d\b x,
    \quad i=1,2, \\
      (f, v)_{\Omega_i} &= \int_{\Omega_i} f  v d\b x,
    \quad i=1,2, \\
        \langle u,v\rangle  &= \int_{\Gamma} u v ds. }
 Similarly, by \meqref{i2} and an integration by parts on $\Omega_2$,
  it follows (noting that $\b n$ is an inward normal vector to $\Omega_2$) that
   \a{ \int_\Gamma  g_2 v ds
       & =  \int_\Gamma  ( \gamma_2 w - \wdn )  v ds
          =  \int_{\Omega_2} (\nabla w\cdot\nabla v+
      \Delta w v) d\b x + \gamma_2 \int_\Gamma w  v ds \\
    & v=  \int_{\Omega_2} (\nabla w \cdot\nabla v -f v)
        d\b x +  \gamma_2  \int_\Gamma w v ds. }
  This way, we get the second variational problem on $\Omega_2$:
   \a{ a_2(w,v) +   \gamma_2 \langle w ,v\rangle
        = (f, v)_{\Omega_2} +  \langle g_2,v\rangle
    \quad \forall v\in V_2.
   }

\begin{definition}\lab{add}
 (The Robin-Robin DD method.)
\quad Given $g^0_1(=0)$ on $\Gamma$,
    a serial version domain decomposition iteration
   consists the following five steps ($m=0,1,\dots$):
  \begin{enumerate}
  \item Solve on $\Omega_1$ for $u^m$:
      \an{\lab{s1}  a_1( u^m, v) + \gamma_1\langle u^m,v\rangle
       & =  (f, v)_{\Omega_1} +  \langle g_1^m,v\rangle
    \quad \forall v\in V_1.
    }
  \item Update the interface condition on $\Gamma$:
      \an{\lab{s2} g^{m}_2 = - g^m_1
    + \big(\gamma_2 +  \gamma_1  \big) u^m.
    }
  \item Solve on $\Omega_2$ for $w^m$:
      \an{\lab{s3}  a_2(w^m,v) +   \gamma_2 \langle w^m ,v\rangle
        = (f, v)_{\Omega_2} +  \langle g_2^m,v\rangle
    \quad \forall v\in V_2.
    }
  \item Update the other interface condition on $\Gamma$:
      \an{\lab{s4} \tilde g^{m}_1 = -   {g^m_2}
    + \big( \gamma_1 +  \gamma_2  \big) w^m.
    }
  \item Get the next iterate by a relaxation:
      \an{\lab{s5}   g^{m+1}_1 =  \theta g_1^m +
          (1-\theta) \tilde g^{m}_1 .
    }

  \end{enumerate}

\end{definition}

  The rest of paper is organized as follows.
 In section 2, we shall show that although the  Robin-Robin
   DD method cannot achieve the geometrical convergence rate at the
   continuous PDE level, but it does at the discrete level.
 In section 3, we shall give an explicit convergence rate of the
  DD method on uniform meshes.  In section 4, we shall extend our
  method to more general quasi-uniform meshes. Using the Dirichlet-to-Neumann
  operator, we shall prove that the Robin-Robin DD method
  is optimal. Finally, in the last section, we shall present some numerical
  results to support our theory.
 It is seen from our numerical implementation that this DD method is better
  than Dirichlet-Neumann DD method and one-parameter Robin-Robin
  DD method.

\section{A Von Neumann analysis}\label{sec:von}
In this section,  through a simple model problem,
 we shall show that for the new DD method it is not possible
 to get the geometrical convergence rate strictly less than one
 at the continuous level, but
  it is possible at the discrete level.

Let us assume that $\Omega_1=[-\pi,0]\times[0,\pi]$ and
  $\Omega_2=[0,\pi]\times[0,\pi]$,
and it is enough for us to assume that $f\equiv 0$ so that the
  true solutions of Equation
  \meqref{laplace} vanishes.
Now if $g_1= \hat{g}_1\sin ky$ on $\Gamma$, from  Equations
    \meqref{laplace} and \meqref{i1}, the solution on $\Omega_1$ is
\a{
   u = \hat{u}\sinh(k(x+1))\sin ky, \quad \mbox{where} \quad
   \hat{u}=\frac{\hat{g}_1}{\gamma_1\sinh k+k\cosh k}.
}
If $g_2= \hat{g}_2\sin ky$ on $\Gamma$, from
   Equations \meqref{laplace} and \meqref{i2},  the solution on $\Omega_2$ is
\a{
   w = \hat{w}\sinh(k(x-1))\sin ky, \quad \mbox{where} \quad
    \hat{w}=-\frac{\hat{g}_2}{\gamma_2\sinh k+k\cosh k}.
}
By Definition \mref{add},
    if the initial error is $g_1^m=\hat{g}_1^m\sin ky$ on $\Gamma$,
then
\a{   g_2^m= \hat{g}_2^m \sin ky, \quad \mbox{where} \quad  \hat{g}_2^m =
\hat{g}_1^m\left( \frac{\gamma_2+\gamma_1}{\gamma_1+k\coth k}-1\right) .
}
Then, by \meqref{s3} and \meqref{s4},
\a{
   \tilde{g}_1^m = \hat{\tilde{g}}_1^m \sin ky, \quad \mbox{where} \quad
   \hat{\tilde{g}}_1^m  =  \hat{g}_2^m \left(
      \frac{\gamma_2+\gamma_1}{\gamma_2+k\coth k}-1\right).
}
Finally, after the relaxation step \meqref{s5},
\a{
   g_1^{m+1} = \hat{g}_1^{m+1} \sin ky,
}
where
\a{
 \hat{g}_1^{m+1} =\theta  \hat{g}_1^m + (1-\theta)\hat{\tilde{g}}_1^m
          = \rho \hat{g}_1^m,
 }
  and the factor
  \e{
    \rho = \theta + (1-\theta)\left(
     \frac{\gamma_2+\gamma_1}{\gamma_2+k\coth k}-1\right)
  \left(\frac{\gamma_2+\gamma_1}{\gamma_1+k\coth k}-1\right). \label{Von:rho}}
Now for the fixed parameters $0<\theta<1$, $\gamma_1>0$
   and $\gamma_2>0$,  if $k$ tends to infinity, then
\e{
   \rho \approx 1 -2(1-\theta)\frac{\gamma_1+\gamma_2}{k}\to 0.
}
Therefore in the continuous case, it is impossible to get the convergence rate
    independent of the
   frequency (or the wave number) $k$.
  On the other hand, if $k$ is bounded by $1\le k \le K$, we may
  obtain the convergence rate $\rho$ , which is independent of $k$
	(but dependent on $K$),
   through choosing the
   three parameters $\gamma_1$, $\gamma_2$ and $\theta$.

\begin{lemma}\label{lem:theta} If $a$ and $b$ are two non-negative
      constants, then the function
\e{ \lab{rho-theta}
    \rho(\theta)= \max\left\{ |\theta - (1-\theta)a|,
    |\theta - (1-\theta)b| \right\}
  }
  attains the minimum value $\frac{|b-a|}{2+a+b}$ at
  $\theta_0=\frac{a+b}{2+a+b}$.
\end{lemma}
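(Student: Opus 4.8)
The plan is to reduce the statement to a fact about two affine functions and then sandwich $\rho$ between a lower bound valid for all $\theta$ and its value at the claimed minimizer. Write $f(\theta)=\theta-(1-\theta)a=(1+a)\theta-a$ and $g(\theta)=\theta-(1-\theta)b=(1+b)\theta-b$, so that $\rho(\theta)=\max\{|f(\theta)|,|g(\theta)|\}$. Each of $|f|$ and $|g|$ is a convex, V-shaped piecewise-linear function, so $\rho$ is convex; this already guarantees that any point at which $\rho$ fails to decrease in either direction is a global minimizer. Rather than differentiate, I would first prove the lower bound $\rho(\theta)\ge\frac{|b-a|}{2+a+b}$ for every $\theta$, and then check that $\theta_0=\frac{a+b}{2+a+b}$ attains it.

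For the lower bound, the key step is to exhibit a fixed linear combination of $f$ and $g$ that is constant in $\theta$. Choosing the multipliers $1+b$ and $1+a$ cancels the $\theta$-term:
\[
  (1+b)\,f(\theta)-(1+a)\,g(\theta)=b-a\qquad\text{for all }\theta .
\]
Applying the triangle inequality and bounding each modulus by $\rho(\theta)$ then gives
\[
  |b-a|\le (1+b)\,|f(\theta)|+(1+a)\,|g(\theta)|\le (2+a+b)\,\rho(\theta),
\]
which is exactly $\rho(\theta)\ge\frac{|b-a|}{2+a+b}$, since $1+a,1+b>0$.

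It remains to show the bound is sharp at $\theta_0$. Substituting $\theta_0=\frac{a+b}{2+a+b}$ into $f$ and $g$, a short computation gives $f(\theta_0)=\frac{b-a}{2+a+b}$ and $g(\theta_0)=-\frac{b-a}{2+a+b}$, so $|f(\theta_0)|=|g(\theta_0)|=\frac{|b-a|}{2+a+b}$ and hence $\rho(\theta_0)=\frac{|b-a|}{2+a+b}$, meeting the lower bound. Together with the previous inequality this proves that $\theta_0$ is a minimizer with the asserted value; the degenerate case $a=b$ is included, yielding value $0$ at $\theta_0=\frac{a}{1+a}$.

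I expect the only non-obvious point to be the discovery of the constant combination $(1+b)f-(1+a)g\equiv b-a$: once the right multipliers are guessed, the lower bound is a one-line triangle-inequality estimate and the attainment is an explicit substitution, so there is no case analysis and the sharp constant drops out automatically. A reader preferring a calculus-free minimality check could instead invoke the convexity of $\rho$ and observe that the two branches cross at $\theta_0$ with opposite-signed slopes (left slope $-(1+b)<0$, right slope $+(1+a)>0$), forcing a minimum there; but the combination argument is cleaner and also certifies that the value cannot be beaten by any $\theta$.
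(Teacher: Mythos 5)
Your proof is correct, but it certifies optimality by a different mechanism than the paper. The paper's proof is geometric: it plots the two V-shaped graphs $|\theta-(1-\theta)a|$ and $|\theta-(1-\theta)b|$, reads off from the figure that the minimum of the maximum occurs at the crossing point of the two branches, and finds that point by solving $\bigl(\theta-(1-\theta)a\bigr)+\bigl(\theta-(1-\theta)b\bigr)=0$, which gives the same $\theta_0$. You instead produce a global lower bound valid for every $\theta$ via the constant combination $(1+b)f-(1+a)g\equiv b-a$ and the triangle inequality, and then verify attainment at $\theta_0$ by direct substitution. What your route buys is a self-contained algebraic certificate of global minimality that needs no picture, no convexity appeal, and no case analysis on the sign pattern of the branches; the paper's route buys brevity and makes the location of the optimum visually evident, but its minimality claim rests on the figure rather than on an explicit inequality. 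Incidentally, your substitution also exposes a typo in the paper's proof: there the common value at $\theta_0$ is written as $\frac{a+b}{2+a+b}$, whereas the correct value (consistent with the lemma statement) is $\frac{|b-a|}{2+a+b}$, exactly as you compute. Your closing remark about the opposite-signed one-sided slopes at $\theta_0$ is also a valid alternative minimality check and is essentially a rigorous rendering of the paper's picture argument.
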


 \begin{figure}[htb] \setlength{\unitlength}{1.5pt} \begin{center}
     \begin{picture}(140,66)(-10,-5)
     \def\la{\vrule width.4pt height.4pt}
      \put(-10,0){\vector(1,0){120}}
    \put(0,-6){\vector(0,1){65}}
     \put(50,0){\line(1,1){50}}  \put(50,0){\line(-1,1){50}}
     \put(25,0){\line(-3,2){25}}  \put(25,0){\line(3,2){75}}
    \put(-6,18){$a$}\put(-6,47){$b$}\put(115,-2){$\theta$}
   \put(70,10){\vector(-1,0){10}}\put(72,8){$|\theta - (1-\theta)b|$}
   \put(78,44){\vector(1,0){13}}\put(38,42){$|\theta - (1-\theta)a|$}
   \put(38,15){$A$}
  \end{picture}\end{center}
\caption{Graphs of $|\theta - (1-\theta)a|$ and
	$|\theta - (1-\theta)b|$,  $b> a\ge 0$. }
\lab{two-line}
\end{figure}
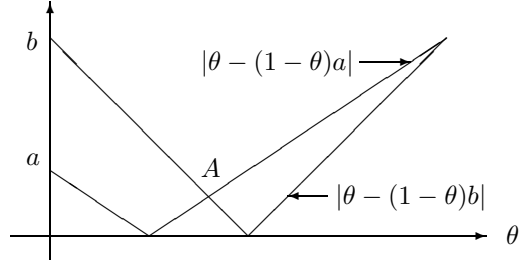

\begin{proof} Without loss of generality, we assume $b\ge a$.
   Both terms in \meqref{rho-theta} are
   piecewise linear functions.  We plot them in
	Figure \mref{two-line}.
   The minimal value is attained at the point $A$, where
   two lines intersect:
\a{
\theta - (1-\theta)a + \theta - (1-\theta)b=0.}
That is $\theta=\theta_0=\frac{a+b}{2+a+b}$, and
\a{
|\theta_0 - (1-\theta_0)a| = |\theta_0 - (1-\theta_0)b|=\frac{a+b}{2+a+b}.
}
So we get the lemma.
\end{proof}

\begin{lemma}\lab{lem:fx} For any $z\ge 0$, the function
\e{
 \omega(z) = \frac{\gamma_2-z}{\gamma_2+z}
	\cdot \frac{z-\gamma_1}{z+\gamma_1}
}
attains the maximum value at $z_0=\sqrt{\gamma_1\gamma_2}$:
\e{
   \max_{z>0}\omega(z)= \frac{(\eta-1)^2}{(\eta+1)^2},\quad
  \mbox{where}\quad \eta=\sqrt{\frac{\gamma_2}{\gamma_1}}.}
\end{lemma}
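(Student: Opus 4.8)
The plan is to collapse the two-factor rational function into a single monotone function of one auxiliary variable, so that an arithmetic--geometric mean inequality locates the optimizer in one line. First I would clear the product. Writing $s=\gamma_1+\gamma_2$ and $p=\gamma_1\gamma_2$, a direct multiplication gives
\[
   \omega(z)=\frac{(\gamma_2-z)(z-\gamma_1)}{(\gamma_2+z)(z+\gamma_1)}
           =\frac{-z^2+sz-p}{\,z^2+sz+p\,}.
\]
Since only $z>0$ matters, I would divide numerator and denominator by $z$, which produces the crucial form $\omega(z)=\dfrac{s-(z+p/z)}{\,s+(z+p/z)\,}$.

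Second, I introduce $t=z+p/z$ and observe that $\omega=g(t)$ with $g(t)=\dfrac{s-t}{s+t}$. A one-line computation $g'(t)=-2s/(s+t)^2<0$ shows that $g$ is strictly decreasing for $t>0$, so maximizing $\omega$ over $z>0$ is \emph{exactly} the same as minimizing $t=z+p/z$. By the arithmetic--geometric mean inequality, $z+p/z\ge 2\sqrt{p}$ with equality if and only if $z=\sqrt p=\sqrt{\gamma_1\gamma_2}$; moreover $t\to\infty$ as $z\to 0^{+}$ or $z\to\infty$, so $t$ attains its genuine global minimum $2\sqrt p$ at the interior point $z_0=\sqrt{\gamma_1\gamma_2}$. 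This simultaneously identifies the maximizer claimed in the statement and certifies that it is a true maximum rather than merely a critical point. Substituting $t=2\sqrt p$ back gives
\[
   \max_{z>0}\omega(z)=\frac{s-2\sqrt p}{s+2\sqrt p}
   =\frac{\gamma_1+\gamma_2-2\sqrt{\gamma_1\gamma_2}}{\gamma_1+\gamma_2+2\sqrt{\gamma_1\gamma_2}}
   =\frac{(\sqrt{\gamma_2}-\sqrt{\gamma_1})^2}{(\sqrt{\gamma_2}+\sqrt{\gamma_1})^2},
\]
and dividing top and bottom by $\gamma_1$ turns this into $(\eta-1)^2/(\eta+1)^2$ with $\eta=\sqrt{\gamma_2/\gamma_1}$, as required. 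Note that $\omega$ is symmetric under the swap $\gamma_1\leftrightarrow\gamma_2$, so there is no loss in any assumed ordering of the two parameters.

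The only genuinely non-routine step is recognizing that, after dividing by $z$, the variable enters solely through the combination $z+p/z$; this is what linearizes the whole problem. The brute-force alternative---setting $\omega'(z)=0$ directly---is the trap, since it leads to an unwieldy polynomial identity whose useful content is precisely the factor $z^2-p$. Thus I expect the main (and only) obstacle to be spotting the substitution $t=z+p/z$; once it is in hand, monotonicity of $g$ together with AM--GM makes the remainder purely mechanical.
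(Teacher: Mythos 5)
Your proof is correct, but it takes a different route from the paper's. The paper differentiates $\omega$ directly and records the explicit formula
\[
\omega'(z)=\frac{2(\gamma_1+\gamma_2)(\gamma_1\gamma_2-z^2)}{(z+\gamma_1)^2(z+\gamma_2)^2},
\]
reading off from the factor $\gamma_1\gamma_2-z^2$ that $\omega$ increases on $(0,z_0)$ and decreases on $(z_0,\infty)$; this unimodality is then stated as a side consequence (the minimum of $\omega$ over any interval $[z_1,z_2]$ is attained at an endpoint), which the paper reuses later in the proof of Theorem \ref{main}. You instead divide numerator and denominator by $z$ to get $\omega=g(z+p/z)$ with $g(t)=(s-t)/(s+t)$ strictly decreasing, and then invoke AM--GM; the algebra checks out ($s-2\sqrt p=(\sqrt{\gamma_2}-\sqrt{\gamma_1})^2$, etc.), and your route avoids the quotient-rule computation entirely while making the location of the maximizer transparent. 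Note that your argument also yields the same unimodality the paper needs downstream, since $z\mapsto z+p/z$ is decreasing on $(0,\sqrt p)$ and increasing on $(\sqrt p,\infty)$ and $g$ is strictly decreasing --- it would be worth stating that corollary explicitly if this lemma is to serve the later endpoint argument. The one cosmetic caveat is that the derivative formula \eqref{w-d} itself is cited later in the paper, so replacing the proof wholesale would require adjusting that cross-reference.
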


\begin{proof} The derivative of $\omega(z)$ is
 \an{\lab{w-d}
   \omega'(z) = \frac{2(\gamma_1+\gamma_2)
  (\gamma_1\gamma_2-z^2)}{(z+\gamma_1)^2(z+\gamma_2)^2}.
 }
 So $\omega(z)$ monotonically increases when $z<z_0$
     and monotonically decreases when $z>z_0$.
In particular,  then the minimum value of $\omega(z)$ on an
	interval $[z_1,z_2]$ is
     attained at one of the end points:
 \an{ \lab{end-min}
     \min_{z\in [z_1,z_2]}\omega(z) = \min\{ \omega(z_1),\omega(z_2)\}.
 }
  By \meqref{w-d}, $\omega(z)$ attains the only global
       maximum value at $z_0=\sqrt{\gamma_1\gamma_2}$:
\a{
    \omega(z_0)=\frac{\gamma_2-\sqrt{\gamma_1\gamma_2}}
    {\sqrt{\gamma_1\gamma_2}+\gamma_2}
  \frac{\sqrt{\gamma_1\gamma_2}-\gamma_1}
   {\sqrt{\gamma_1\gamma_2}+\gamma_1} =  \frac{(\eta-1)^2}{(\eta+1)^2}.
  }
The lemma is proved.
\end{proof}

\comment{
 g1=10; g2=100;
z=0:0.1:100; w=(g2-z)./(g2+z).*(z-g1)./(z+g1);
plot(z,(g2-z)./(g2+z).*(z-g1)./(z+g1));
z0=sqrt(g1*g2); et=sqrt(g2/g1);
[max(w) (et-1)^2/(et+1)^2]
}

From Equation \meqref{Von:rho},
  \a{
\rho  = \theta - (1-\theta)
     \frac{ \gamma_2-k\coth k}{\gamma_2+k\coth k} \cdot
     \frac{k\coth k-\gamma_1}{\gamma_1+k\coth k}
   =\theta-(1-\theta)\,\omega(k\coth k).
  }
If $\gamma_1$ and $\gamma_2$ are chosen such that
   $\gamma_1<\coth 1$ and  $\gamma_2>K\coth K$, then $\omega(k\coth k)>0$.
By  Lemma \ref{lem:fx},
 \a{
   |\rho|\le \max\{|\theta|, |\theta-(1-\theta)\omega(z_0)|\}.
 }
Applying Lemma \ref{lem:theta}, we may select 
 \e{\label{von:rho1o3} \theta_0=\frac{\omega(z_0)}{2+\omega(z_0)}
   \quad\Rightarrow \quad
   |\rho|\le |\theta_0| < \frac{1}{3}.
 }

 \begin{rem}\lab{r1}
 If $\eta>\frac{\sqrt{2}+1}{\sqrt{2}-1}$,
   we may just set $\theta=\frac{1}{3}$,
    and $|\rho|$ is also less than $\frac{1}{3}$.
 Moreover, this bound can be improved further
   if we carefully estimate the minimum value of $\omega(z)$.
 \end{rem}

 \begin{rem}\lab{rmk:gamma1} The constrain
   $\gamma_1<\coth1$ can be relaxed. Actually, if $\gamma_1>\coth 1$, then
  \a{
   |\rho|\le \max\left\{ \left|\theta+(1-\theta)\zeta\right|,
   |\theta-(1-\theta)\omega(z_0)|\right\},
 }
 where $\zeta=\frac{\gamma_1-\coth1}{\gamma_1+\coth1}$.
   Then we set $\theta=0$ if $\omega(z_0)\le \zeta$ and
 set $\theta=\frac{\omega(z_0)-\zeta}{2+\omega(z_0)-\zeta}$
   if $\omega(z_0)>\zeta$, and
 \begin{equation*}
    |\rho| \le\left\{\begin{array}{cc}  \zeta,
   &\mbox{if}\ \omega(z_0)\le \zeta, \\
    \frac{\omega(z_0)+\zeta}{2+\omega(z_0)-\zeta}, &
    \mbox{if}\ \omega(z_0)> \zeta.
    \end{array}
    \right.
 \end{equation*}
Note that $\omega(z_0)<1$ and $\frac{\omega(z_0)+\zeta}
   {2+\omega(z_0)-\zeta}\le \frac{1+\zeta}{3-\zeta}$
   which is also independent of $K$.
 \end{rem}

The Von Neumann analysis shows that the Robin-Robin
   DD does have a constant rate of convergence,
   independent of the frequency number $k$ or $K$.
But the selection of the two parameters depends on $K$.
The limit case indicates that
    the method deteriorates to, i.e., $\gamma_2=\infty$,
    a Robin-Dirichlet DD method.

\section{Convergence on uniform grids}
In this section, we analyze the Robin-Robin DD method on uniform grids.
In this case, we give explicit eigenvalues of the iterative matrix,
  and show the optimal rate of convergence.

We post a uniform grid of size $h=1/(2n)$ on the domain $\Omega=[0,1]^2$,
   the unit square,  shown in Figure \mref{u-grid}.
Then, we subdivide the domain into two, as shown in Figure \mref{number}.
We give two numberings of nodal values of the $C^0$-$P_1$ 
  finite element functions.
One numbering is on the interface $\Gamma$.
The other  one is within each subdomain, $\Omega_1$ and $\Omega_2$.
 When numbering the
  nodes in $\Omega_2$,  we go from right to left so that the
  nodal index is symmetric to that on the left domain $\Omega_1$.

 \begin{figure}[htb] \setlength{\unitlength}{1.8pt} \begin{center}
    \begin{picture}(160,70)(0,0)
    \def\bg{\begin{picture}(60,60)(0,0)
      \multiput(0,0)(0,60){2}{\line(1,0){60}}
     \multiput(0,0)(30,0){3}{\line(0,1){60}}
      \put(28,61){$\Gamma$}
    \put(1,1){$\Omega_1$}  \put(53,1){$\Omega_2$}
   \multiput(10,10)(0,10){5}{\multiput(0,0)(10,0){5}{\circle*2}}
    \end{picture}}
   \put(0,0){\begin{picture}(60,60)(0,0) \put(0,0){\bg}
   \put(31.5,11.5){$\b x_1$}\put(31.5,21.5){$\b x_2$}
   \put(31.5,31.5){$\b x_3$}\put(31.5,52.5){$\b x_{2n-1}$}
    \end{picture}}
    \put(80,0){\begin{picture}(60,60)(0,0) \put(0,0){\bg}
  \put(8,11.5){1} \put(31.5,11.5){$ $}
   \put(49,11.5){1}
  \put(8,21.5){$2$} \put(31.5,21.5){$ $} \put(49,21.5){$2$}
  \put(-6,66.5){$2n-1$} \put(10,63.5){\vector(0,-1){12}}
  \put(41,61){\vector(-1,-1){10}} \put(63,50){\vector(-1,0){12}}
   \put(65.5,48.5){$2n-1$}
   \put(35.5,64.5){$(2n-1)n$(last)}
    \end{picture}}
 \end{picture}\end{center}
\caption{ Nodal basis numberings, on $\Gamma$ and on
	$\Omega_1\cap\Omega_2$. }
\lab{number}
\end{figure}
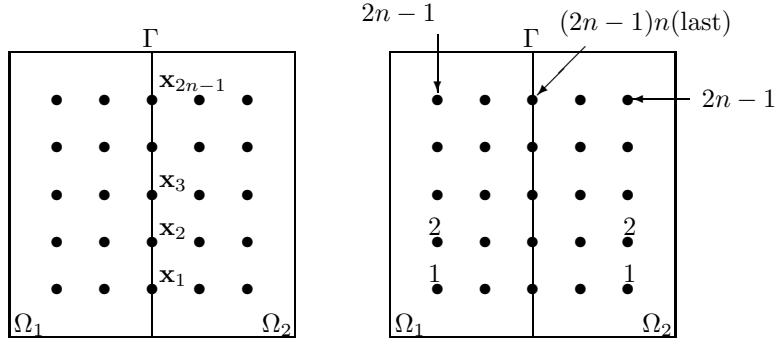
Let $M_{\Gamma}$ and $A_{\Gamma}$ be two tridiagonal ${(2n-1)\times(2n-1)}$
  matrices:
\a{ M_\Gamma = \frac h 6 \p{ 4 & 1 \\ 1 & 4 & \ddots \\
     &\ddots & \ddots  &1 \\ && 1 &4 },
    \quad A_{\Gamma} = \frac{1}{2} \p{ 4 & -1 \\ -1 & 4 & \ddots \\
     &\ddots & \ddots  &-1 \\ && -1 &4 }.
     }
Here $M_\Gamma$ is just the mass matrix of the inner product
   $\langle\cdot,\cdot\rangle$.
Let $R_h$ be the $(2n-1)\times(2n-1)n$ matrix representing
   a restriction operator on $\Gamma$:
 \an{ \lab{R-h}  R_h=(0_{2n-1}, \cdots,0_{2n-1},I_{2n-1}).
 }
The stiffness matrix of the bilinear form $a_1(\cdot,\cdot)$,
  under nodal basis, 
   (and $a_2(\cdot,\cdot)$ too) is
\a{
   A_h=A_0 -R_h^TA_{\Gamma}R_h,
}
where the matrix $A_0$ is the stiffness matrix of size $(2n-1)n$,
   for the Laplace operator on a $(2n)\times(n+1)$ uniform grid with
    zero Dirichlet boundary condition.
$A_0$ is same as the matrix of standard five-point finite difference
matrix, which  has the eigen-decomposition 
   \cite{Demmel1997,Greenbaum1997}:
\e{ \lab{A-h}
 A_0 = (\Phi_n\otimes\Phi_{2n-1})^T
    (\Lambda_n\otimes I_{2n-1} + I_n \otimes \Lambda_{2n-1})
    (\Phi_n\otimes\Phi_{2n-1}),
}
where $\Lambda_m$ denotes an diagonal matrix whose $(i,i)$-th entry is
 \an{\lab{phi-m-i}
      \lambda_i^{(m)}=   4\sin^2\frac{i \pi}{2(m+1)}, }
  and    $\Phi_m$ denotes an orthogonal matrix defined by
 \an{\lab{phi-m}
     \Phi_m = \p{\phi_1^{(m)} & \cdots & \phi_m^{(m)}  },
	\quad\hbox{with } \  \phi_i^{(m)}=\sqrt{\frac{2}{m}}
	\p{ \sin\frac{i\pi}{m+1}\\ \sin\frac{2i\pi}{m+1}\\\vdots
	\\ \sin\frac{mi\pi}{m+1}}. }
 Here in \meqref{A-h},
     a tensor product matrix $C_{mk\times mk}=A_{m\times m}
	\otimes B_{k\times k}$
          is defined with
   the $(i,j)$-th entry
   \a{C_{ij}= A_{i',j'}B_{i'',j''}, \quad \hbox{where }\
	i&= (i'-1) k+i'',\\
	j&= (j'-1) k+j''. }

In Definition \mref{add}, for \meqref{s1}, the error $e_u^m=u-u^m$
   satisfies the  equation:
  \a{ a_1( e_u^m, v) + \gamma_1\langle e_u^m,v\rangle
       & =    \langle e_{g_1}^m,v\rangle
    \quad \forall v\in V_1.
    }
Here $e_{g_1}^m=g_1-g_1^m$ is the error.
In the matrix-vector form,
  \a{ E_u^m = (A_h + \gamma_1R_h^TM_{\Gamma}R_h)^{-1}
      R_h^T M_\Gamma E_{g_1}^m. }
Here $E_u^m$ is the vector representation of $e_u^m$.
Therefore, by \meqref{s3},
   \an{\lab{Eg2n}
     E_{g_2}^m = \left(-I +(\gamma_2+\gamma_1)R_h(A_h
      + \gamma_1 R_h^TM_{\Gamma}R_h
            )^{-1}R_h^TM_\Gamma \right)
            E_{g_1}^m.}
Symmetrically, by \meqref{s4} and \meqref{s5},
    \an{\lab{Eg1nt}
     \tilde E_{g_1}^m = C_{\gamma_2}
            E_{g_2}^m.}
Here, for simplicity,  we denote the error reduction matrix by
	\an{\lab{Cr} C_{\gamma_k}
	= \left(-I +(\gamma_2+\gamma_1)R_h(A_h
      + \gamma_k R_h^TM_{\Gamma}R_h
            )^{-1}R_h^TM_\Gamma \right), \ k=1,2. }
Finally, by \meqref{s5}, one Robin-Robin DD iteration reduces the
   initial error $ E_{g_1}^m$ to
   \an{\lab{Egf} E_{g_1}^{m+1} &= [\theta I + (1-\theta)C_{\gamma_2}
	C_{\gamma_1}] E_{g_1}^m. }
We find the eigenvalue range of this
   error reduction matrix, via common eigenvectors of all matrices.

\begin{lemma}\lab{big-m} The error reduction matrix \meqref{Egf}
  can be diagonalized by $\Phi_{2n-1}$ defined in \meqref{phi-m}.
   That is,
	\an{\lab{rate}
	 \Phi_{2n-1} [\theta I + (1-\theta)C_{\gamma_2}C_{\gamma_1}]
	 \Phi_{2n-1}^T =\operatorname{diag}
	  (\theta + (1-\theta) c_j), }
   where in the $j$-th diagonal element,
  \an{ c_{j}
	=\frac{\gamma_1a_j  -b_j }
              {\gamma_1a_j  +b_j  }
 	\cdot \frac{\gamma_2 a_j  -b_j  }
              {\gamma_2 a_j  +b_j  }.
	\lab{cj}
     }
   Here in \meqref{cj},
   \an{\lab{a-j}
	 a_j &=\lambda_{M_\Gamma, j}\tilde \lambda_j,
	& \lambda_{M_\Gamma, j} &=  h-\frac h6 \lambda^{(2n-1)}_j
      , \\ \lab{b-j}
	 b_j &= 1-\lambda_{A_\Gamma, j}\tilde \lambda_j,
	&
        \lambda_{A_\Gamma, j}&=  1+\frac 12 \lambda_j^{(2n-1)}, }
   where  $\lambda^{(2n-1)}_j$ is defined in \meqref{phi-m-i} and
	\an{ \lab{t-lambda}
   \tilde \lambda_{j}=  {\frac{2}{n+1}}\sum_{i=1}^n\sin^2\frac{in\pi}{n+1}
   (\lambda^{(n)}_i+ \lambda^{(2n-1)}_j)^{-1}.
}
\end{lemma}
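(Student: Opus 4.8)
The plan is to simultaneously (block-)diagonalize every matrix in \eqref{Cr}--\eqref{Egf} by exploiting the tensor structure of the uniform grid. First I would rewrite the restriction operator \eqref{R-h} as $R_h=e_n^T\otimes I_{2n-1}$, where $e_n$ is the last standard basis vector of the column-index space $\mathbb{R}^n$ and the tensor ordering matches \eqref{A-h} (the $n$ columns running from the interior to the interface $\Gamma$ in the first factor, the $2n-1$ interface rows in the second). Then $R_h^TM_\Gamma R_h=(e_ne_n^T)\otimes M_\Gamma$ and $R_h^TA_\Gamma R_h=(e_ne_n^T)\otimes A_\Gamma$, so $A_h=A_0-(e_ne_n^T)\otimes A_\Gamma$. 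A direct computation shows the tridiagonal Toeplitz matrices $M_\Gamma$ and $A_\Gamma$ are diagonalized by the symmetric orthogonal matrix $\Phi_{2n-1}$ of \eqref{phi-m}, i.e. $\Phi_{2n-1}M_\Gamma\Phi_{2n-1}=\Lambda_{M_\Gamma}:=\operatorname{diag}(\lambda_{M_\Gamma,j})$ and $\Phi_{2n-1}A_\Gamma\Phi_{2n-1}=\Lambda_{A_\Gamma}:=\operatorname{diag}(\lambda_{A_\Gamma,j})$ with the eigenvalues of \eqref{a-j}--\eqref{b-j}; this is the identity $\cos\frac{j\pi}{2n}=1-2\sin^2\frac{j\pi}{4n}$ applied to the standard eigenvalues $d+2e\cos\frac{j\pi}{2n}$ of a tridiagonal Toeplitz matrix.

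Next I would conjugate by $I_n\otimes\Phi_{2n-1}$. Using \eqref{A-h}, the involution property $\Phi_{2n-1}^2=I$, and the two diagonalizations above, the operator $A_h+\gamma_kR_h^TM_\Gamma R_h$ becomes $T_n\otimes I_{2n-1}+I_n\otimes\Lambda_{2n-1}+(e_ne_n^T)\otimes(\gamma_k\Lambda_{M_\Gamma}-\Lambda_{A_\Gamma})$, where $T_n=\Phi_n\Lambda_n\Phi_n$ is the $n\times n$ tridiagonal $(2,-1)$ Laplacian. Since the interface-direction factors $I_{2n-1},\Lambda_{2n-1},\Lambda_{M_\Gamma},\Lambda_{A_\Gamma}$ are all diagonal in the frequency index $j$, this matrix decouples into $2n-1$ independent $n\times n$ blocks
\[
 B_k^{(j)}=\bigl(T_n+\lambda_j^{(2n-1)}I_n\bigr)+\bigl(\gamma_k\lambda_{M_\Gamma,j}-\lambda_{A_\Gamma,j}\bigr)e_ne_n^T,
\]
one per $j$. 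Tracking $\Phi_{2n-1}R_h=e_n^T\otimes\Phi_{2n-1}$ and $R_h^TM_\Gamma\Phi_{2n-1}=e_n\otimes\Phi_{2n-1}\Lambda_{M_\Gamma}$ through \eqref{Cr}, I would show that $\Phi_{2n-1}C_{\gamma_k}\Phi_{2n-1}^T$ is diagonal with $j$-th entry $-1+(\gamma_1+\gamma_2)\lambda_{M_\Gamma,j}\bigl[(B_k^{(j)})^{-1}\bigr]_{nn}$; the key point is that restricting to the interface collapses the whole expression onto the single $(n,n)$ entry of each block inverse.

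It then remains to evaluate that entry. Writing $P_j=T_n+\lambda_j^{(2n-1)}I_n$ and applying the Sherman--Morrison formula to the rank-one update gives $[(B_k^{(j)})^{-1}]_{nn}=\mu_j/(1+\alpha_k\mu_j)$ with $\mu_j=[P_j^{-1}]_{nn}$ and $\alpha_k=\gamma_k\lambda_{M_\Gamma,j}-\lambda_{A_\Gamma,j}$. Expanding $P_j^{-1}=\Phi_n(\Lambda_n+\lambda_j^{(2n-1)}I)^{-1}\Phi_n$ in the orthonormal eigenbasis identifies $\mu_j$ with exactly $\tilde\lambda_j$ of \eqref{t-lambda}, since the squared last entry of $\phi_i^{(n)}$ is $\frac{2}{n+1}\sin^2\frac{in\pi}{n+1}$. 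Substituting $a_j=\lambda_{M_\Gamma,j}\mu_j$, $b_j=1-\lambda_{A_\Gamma,j}\mu_j$ turns $1+\alpha_k\mu_j$ into $\gamma_ka_j+b_j$, so the $j$-th eigenvalue of $C_{\gamma_k}$ collapses to $\frac{(\gamma_1+\gamma_2-\gamma_k)a_j-b_j}{\gamma_ka_j+b_j}$, i.e. $\frac{\gamma_2a_j-b_j}{\gamma_1a_j+b_j}$ for $k=1$ and $\frac{\gamma_1a_j-b_j}{\gamma_2a_j+b_j}$ for $k=2$. Since diagonalization makes the two $C_{\gamma_k}$ commute, multiplying these (they share the denominator $(\gamma_1a_j+b_j)(\gamma_2a_j+b_j)$) yields \eqref{cj}, and hence $\Phi_{2n-1}[\theta I+(1-\theta)C_{\gamma_2}C_{\gamma_1}]\Phi_{2n-1}^T=\operatorname{diag}(\theta+(1-\theta)c_j)$ as in \eqref{rate}. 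I expect the main obstacle to be the bookkeeping of the second and third steps: correctly aligning the tensor ordering of \eqref{A-h} with the interface numbering so that $R_h=e_n^T\otimes I_{2n-1}$, so that the block decoupling and the extraction of the $(n,n)$ entry are legitimate; the Sherman--Morrison computation and the spectral identification $\mu_j=\tilde\lambda_j$ are then essentially mechanical.
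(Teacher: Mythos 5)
Your proposal is correct and follows essentially the same route as the paper: diagonalization in the interface direction by $\Phi_{2n-1}$, the tensor eigendecomposition \eqref{A-h} to identify $\tilde\lambda_j$ (as the $(n,n)$ entry of the resolvent, i.e.\ the eigenvalues of $R_hA_0^{-1}R_h^T$), and Sherman--Morrison to absorb the interface perturbation, yielding the same diagonal entries $\frac{\gamma_2a_j-b_j}{\gamma_1a_j+b_j}$ and $\frac{\gamma_1a_j-b_j}{\gamma_2a_j+b_j}$ for $C_{\gamma_1}$ and $C_{\gamma_2}$. The only difference is organizational: you conjugate first and apply scalar Sherman--Morrison blockwise, while the paper applies the matrix Sherman--Morrison--Woodbury formula globally and then diagonalizes.
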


\begin{proof}	In \meqref{Eg2n},
    by the Sherman-Morrison-Woodbury formula,
 \a{
    &\quad\  (A_h + \gamma_1 R_h^TM_{\Gamma}R_h)^{-1} \\
    & = (A_0 + R_h^T(-A_\Gamma + \gamma_1 M_{\Gamma}) R_h)^{-1}
   \\ &=A_0^{-1} -A_0^{-1}R_h^T
    (( -A_{\Gamma}+\gamma_1 M_\Gamma)^{-1}+R_hA_0^{-1}R_h^T)^{-1}R_hA_0^{-1}.
 }
 Now letting $B_0=R_hA_0^{-1}R_h^T$, we have
 \a{
  R_h(A_h + \gamma_1 R_h^TM_{\Gamma}R_h)^{-1}R_h^T
  = B_0 - B_0 ( ( -A_{\Gamma}+\gamma_1 M_\Gamma)^{-1}+
     B_0)^{-1}B_0.
 }
 By \meqref{R-h} and \meqref{A-h}, notice
   that $(\Phi_n\otimes \Phi_{2n-1})R_h^T=\phi_n^{(n)} \otimes \Phi_{2n-1}$,
    we can compute $B_0$:
 \a{
   B_0 
    &= (\phi_n^{(n)} \otimes \Phi_{2n-1})^T  (\Lambda_n\otimes I_{2n-1} + I_n \otimes \Lambda_{2n-1})
	  (\phi_n^{(n)} \otimes \Phi_{2n-1}) \\
    &= \sum_{i=1}^n (\phi_{n,i}^{(n)} )^2
	  \Phi_{2n-1} ^T (\lambda^{(2n-1)}_i I_{2n-1} +\Lambda_{2n-1})^{-1}
	  \Phi_{2n-1}  \\
  &=  \Phi_{2n-1} ^T \left( \sum_{i=1}^n (\phi_{n,i}^{(n)} )^2
	  (\lambda^{(2n-1)}_i
	 I_{2n-1} +\Lambda_{2n-1})^{-1} \right)  \Phi_{2n-1}  \\
     &=  \Phi_{2n-1} ^T \tilde \Lambda_0  \Phi_{2n-1},
 } where $\phi_{n,i}^{(n)} $ is the $i$-th entry of vector
	$\phi_n^{(n)}$ defined in \meqref{phi-m},
	and  $\tilde{\Lambda}_0$ is a diagonal matrix,
    whose $(j,j)$-th entry is
  define in \meqref{t-lambda}.
The matrices on $\Gamma$ are diagonalized as
   $ M_\Gamma   =\Phi_{2n-1}^T \operatorname{diag}(\lambda_{M_\Gamma,j})
	       \Phi_{2n-1}$ and
      $ A_\Gamma  =\Phi_{2n-1}^T \operatorname{diag}(\lambda_{A_\Gamma,j})
	       \Phi_{2n-1}$, where $\lambda_{M_\Gamma,j}$
	and $\lambda_{A_\Gamma,j}$ are defined in \meqref{a-j} and
	\meqref{b-j}, respectively.
Thus combining last two equalities, we get
 \a{ &\quad \  R_h(A_h + \gamma_1 R_h^TM_{\Gamma}R_h)^{-1}R_h^T \\
  & = B_0 - B_0 (  \Phi_{2n-1}^T( -\operatorname{diag}(\lambda_{A_\Gamma,j})+
      \gamma_1 \operatorname{diag}(\lambda_{M_\Gamma,j}) )^{-1}\Phi_{2n-1} +
     B_0)^{-1}B_0 \\
  & = \Phi_{2n-1}^T \left[ \tilde{\Lambda}_0 -  \tilde{\Lambda}_0^2
	\{ (-\operatorname{diag}(\lambda_{A_\Gamma,j})+
      \gamma_1 \operatorname{diag}(\lambda_{M_\Gamma,j}) ) ^{-1}
	+ \tilde{\Lambda}_0 \}^{-1} \right] \Phi_{2n-1} \\
	 &= \Phi_{2n-1}^T \left[ \tilde{\Lambda}_0^{-1}
             -\operatorname{diag}(\lambda_{A_\Gamma,j})+
      \gamma_1 \operatorname{diag}(\lambda_{M_\Gamma,j})  \right]^{-1}
           \Phi_{2n-1}.
 }
  By \meqref{Cr},
   \a{ C_{\gamma_1} &= \Phi_{2n-1}^T (-I+(\gamma_2+\gamma_1)
       \left[ (\tilde{\Lambda}_0^{-1}
             -\operatorname{diag}(\lambda_{A_\Gamma,j}))
        \operatorname{diag}(\lambda_{M_\Gamma,j}^{-1} )+
      \gamma_1 I )  \right]^{-1}
           )  \Phi_{2n-1}
	\\ &= \Phi_{2n-1}^T
      \operatorname{diag}\left(
	\frac{-1+ \gamma_2\lambda_{M_\Gamma,j} \tilde\lambda_j+
	   \lambda_{A_\Gamma,j} \tilde\lambda_j }
	{1+ \gamma_1  \lambda_{M_\Gamma,j}\tilde\lambda_j -
             \lambda_{A_\Gamma,j} \tilde\lambda_j } \right)  \Phi_{2n-1}
	\\ &= \Phi_{2n-1}^T
      \operatorname{diag}\left(
	\frac{ \gamma_2a_j-b_j }
	{ \gamma_1 a_j + b_j  } \right)  \Phi_{2n-1}.
	}
  In the same fashion, it follows that
  \a{ C_{\gamma_2} &=   \Phi_{2n-1}^T
      \operatorname{diag}\left(
	\frac{ \gamma_1a_j-b_j }
	{ \gamma_2 a_j + b_j  } \right)  \Phi_{2n-1}.
	}
 Thus \meqref{rate} follows.
\end{proof}

In next lemma,  we estimate the eigenvalue $c_j$ in
	the reduction matrix,  \meqref{cj}.

\begin{lemma}\label{ttld01} $(3a_j-b_j)$ is monotonically decreasing,
  i.e.,  $j=1,\dots,
    2n-2$,
  \e{ \lab{3-a-b} 3a_j-b_j \ge 3a_{j+1}-b_{j+1}.}
\end{lemma}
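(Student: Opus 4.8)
The plan is to show that $3a_j-b_j$ is, for every $j$, the value at $\mu_j:=\lambda_j^{(2n-1)}$ of a single scalar function that is monotone, and that $\mu_j$ is itself increasing in $j$. By \eqref{a-j}--\eqref{b-j} we have
\[
  3a_j-b_j=\bigl(3\lambda_{M_\Gamma,j}+\lambda_{A_\Gamma,j}\bigr)\tilde\lambda_j-1 ,
\]
and substituting $\lambda_{M_\Gamma,j}=h-\tfrac h6\mu_j$ and $\lambda_{A_\Gamma,j}=1+\tfrac12\mu_j$ gives the affine form
\[
  3\lambda_{M_\Gamma,j}+\lambda_{A_\Gamma,j}=c+d\,\mu_j,\qquad c:=1+3h>0,\quad d:=\tfrac{1-h}{2}>0 .
\]
Meanwhile \eqref{t-lambda} is exactly $\tilde\lambda_j=\sum_{i=1}^n w_i/(s_i+\mu_j)$ with $j$-independent positive weights $w_i=\tfrac{2}{n+1}\sin^2\frac{in\pi}{n+1}$ and $s_i:=\lambda_i^{(n)}$. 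Since $\mu_j=4\sin^2\frac{j\pi}{4n}$ is strictly increasing for $j=1,\dots,2n-1$ (the argument stays in $(0,\pi/2)$), it suffices to prove that $f(\mu):=(c+d\mu)\sum_i w_i/(s_i+\mu)-1$ is decreasing in $\mu\ge 0$.

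Next I would differentiate, obtaining $f'(\mu)=\sum_i w_i\,(ds_i-c)/(s_i+\mu)^2$. The hard part is that the summand changes sign: $ds_i-c$ is negative for small $s_i$ and positive for large $s_i$, so $f'$ is not negative term by term. The key idea is to compare each term against the zero-crossing $s^\ast:=c/d$, at which $ds_i-c$ vanishes. For every $i$ one has, regardless of the sign of $ds_i-c$,
\[
  \frac{ds_i-c}{(s_i+\mu)^2}\ \le\ \frac{ds_i-c}{(s^\ast+\mu)^2},
\]
which follows from the single sign computation that the difference equals $-d\,(s_i-s^\ast)^2\,(s_i+s^\ast+2\mu)$ divided by a positive factor. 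Summing against the positive weights collapses the problem to a scalar inequality:
\[
  f'(\mu)\ \le\ \frac{1}{(s^\ast+\mu)^2}\sum_{i=1}^n w_i\,(ds_i-c).
\]

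It then remains to check that $\sum_i w_i(ds_i-c)<0$, i.e.\ that the $w_i$-weighted mean of $s_i$ is at most $s^\ast=\frac{2(1+3h)}{1-h}$. Here I would evaluate the two needed sums in closed form, using $\sin^2\frac{in\pi}{n+1}=\sin^2\frac{i\pi}{n+1}$, the identity $s_i=2\bigl(1-\cos\frac{i\pi}{n+1}\bigr)$, and the standard trigonometric sums $\sum_{i=1}^n\cos\frac{2i\pi}{n+1}=-1$ and $\sum_{i=1}^n\cos\frac{i\pi}{n+1}=\sum_{i=1}^n\cos\frac{3i\pi}{n+1}=0$. These give $\sum_i w_i=1$ and $\sum_i w_i s_i=2$, so the weighted mean equals exactly $2$. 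Since $s^\ast>2$ (equivalently $4h>0$), one finds $\sum_i w_i(ds_i-c)=2d-c=-4h<0$, whence $f'(\mu)<0$ and $f$ is strictly decreasing; this yields \eqref{3-a-b}. The main obstacle is precisely the sign-changing summand in $f'$: everything rests on replacing each $s_i$ by the common threshold $s^\ast$, which is legitimate in both sign regimes, and on the clean fact that the weighted mean of the $s_i$ lands exactly at $2$, comfortably below $s^\ast$.
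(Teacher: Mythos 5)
Your proof is correct, and it takes a genuinely different route from the paper's. The paper also reduces \eqref{3-a-b} to showing that $3a_j-b_j+1=(c+d\mu_j)\tilde\lambda_j$ is a decreasing function of $\mu_j=\lambda_j^{(2n-1)}$, but it handles the sign problem by first \emph{symmetrizing} the sum \eqref{t-lambda}: pairing the terms $i$ and $n+1-i$ (using $S+C=1$ and $4SC=\sin^2\frac{i\pi}{n+1}$ for $S=\sin^2\frac{i\pi}{2(n+1)}$) turns the sum into \eqref{t-a}, whose summands $f_i(\xi)$ are then shown, one at a time, to be decreasing in $\xi_j=\sin^2\frac{j\pi}{4n}$ by a quotient-rule computation with a visibly negative numerator; the authors explicitly remark that without this regrouping the individual terms are not monotone. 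You instead keep the sum as is, accept that the summands of $f'(\mu)=\sum_i w_i(ds_i-c)/(s_i+\mu)^2$ change sign, and dominate each one by its value with the denominator frozen at the common zero-crossing $s^\ast=c/d$ (legitimate in both sign regimes, as your identity $-d(s_i-s^\ast)^2(s_i+s^\ast+2\mu)$ over a positive factor shows), which collapses the problem to the moment identities $\sum_i w_i=1$ and $\sum_i w_is_i=2$, giving $f'(\mu)\le -4h/(s^\ast+\mu)^2<0$. Your argument is somewhat more robust --- it uses only the two moments of the weights rather than the exact pairing structure of the eigenvalues --- and it yields a quantitative, strictly negative bound on the derivative (hence strict decrease), whereas the paper's symmetrization exploits the specific trigonometric identities of the uniform grid. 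Both are complete; the moment computations you invoke ($\sum_i\cos\frac{2i\pi}{n+1}=-1$ and $\sum_i\sin^2\frac{i\pi}{n+1}\cos\frac{i\pi}{n+1}=0$ by the $i\mapsto n+1-i$ antisymmetry) check out, as does the positivity of $d=(1-h)/2$ since $h\le 1/2$.
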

\begin{proof}
We rewrite the $\tilde\lambda_{ j}$ (in $a_j$ and $b_j$)
   in a symmetric form so that each $i$-term is a decreasing function
   of $j$ (the original term is not.)
  \a{
     \tilde\lambda_{j}  &=  \frac{2}{n+1} \frac 12 \sum_{i=1}^n
     \frac{ \sin^2 ( i \pi/(n+1)) }
            { 4\sin^2({j\pi}/(4n))+4\sin^2({i\pi}/({2n+2}))}
     \\
       \nonumber &\qquad +  \frac{ \sin^2 ((n+1- i) \pi/(n+1)) }
     { 4\sin^2({j\pi}/(4n))+4\sin^2({(n+1-i)\pi}/({2n+2}))}
         \\
        &=  \frac{1}{n+1} \sum_{i=1}^n
     \frac{  \sin^2 ( i \pi/(n+1))
                    (2 \sin^2({j\pi}/(4n))+1) }
            {4\sin^4({j\pi}/(4n))+4\sin^2({j\pi}/(4n))
    +\sin^2( i \pi/(n+1))}   .
     \nonumber
   }

To shorten expression,  we introduce two more notations
	\an{\lab{xi-j}  \xi_j& = \sin^2 \frac {j\pi}{4n}, \\
	\lab{theta-j}
	      \theta_j & =(1+2 \xi_j )
	  (1+3h+2\xi_j -2h\xi_j ) .}
By \meqref{a-j} and \meqref{b-j}, we have
 \an{\lab{t-a}   3a_j-b_j+1  =   \frac{1}{n+1} \sum_{i=1}^n
     \frac{  \sin^2 ( i \pi/(n+1)) \theta_j  }
            {4\xi_j^2 +4\xi_j  +\sin^2( i \pi/(n+1))}.}
We show that each term is a decreasing function of $\xi_j$.
That is, each term \a{ f_i(\xi) =  \frac{
                    (2 \xi +1)
        (2(1+h)-(1-h)(1-2\xi))  }
            {4\xi ^2 +4\xi  +\sin^2( i \pi/(n+1))} }
  is a decrease function of $\xi$, for $\xi\in(0,1)$.
By the quotient rule,
\a{ f_i'(\xi) &= \frac{
    ( 4(1+h) +8(1-h)\xi )(4\xi^2+4\xi+\sin^2( i \pi/(n+1))) }
     {( 4\xi ^2 +4\xi  +\sin^2( i \pi/(n+1)))^2 }\\
    &\quad - \frac{ ((1+3h)+4(1+h)\xi+4(1-h)\xi^2) (8\xi+4)}
     {( 4\xi ^2 +4\xi  +\sin^2( i \pi/(n+1)))^2 }. }
The combined numerator is
  \a{ &\quad -\left(4(1+h)\cos^2\frac{i\pi}{n+1} +8h\right)
    -\left( 8 (1-h) \cos^2\frac{i\pi}{n+1} + 16 h\right) \xi
    - (32h) \xi^2 <0.
     }
As each term $f_i(\xi_j)$ is desecrating with respect to $j$,
   the sum is a desecrating function of $j$. We prove the lemma.
 \end{proof}

We will find a bound for
   the biggest term $(3a_1-b_1)$, among all $(3a_j-b_j)$,
	in order to bound the $c_j$ in \meqref{cj}.
One can prove that, for all $n\ge 1$,
	\an{ 3a_1-b_1   < -7 h^2/16, \lab{-h-2} }
    where $a_1$ and $b_1$ are defined in \meqref{a-j} and
	\meqref{b-j}, respectively, and $h=1/(2n)$.
But our proof for \meqref{-h-2} is lengthy and tedious.
In this paper, we prove a worse bound only, in the next lemma.


\begin{lemma}\label{lambda01} If $n\ge 11$, then (cf. \meqref{-h-2})
 \an{ 3a_1-b_1 <-0.049 h  < -7 h^2/16, \lab{-h-1} }
    where $a_1$ and $b_1$ are defined in \meqref{a-j} and
	\meqref{b-j}, respectively.
\end{lemma}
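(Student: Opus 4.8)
The plan is to reduce the claim to a single lower bound on a positive sum. Specializing the identity \eqref{t-a} to $j=1$ and abbreviating $s_i=\sin^2\frac{i\pi}{n+1}$ and $D=4\xi_1^2+4\xi_1$, I rewrite $\frac{s_i}{D+s_i}=1-\frac{D}{D+s_i}$ to obtain
\[
 3a_1-b_1 = -1+\frac{\theta_1}{n+1}\sum_{i=1}^n\frac{s_i}{D+s_i}
  = \frac{n\theta_1-(n+1)}{n+1}-\frac{\theta_1}{n+1}\sum_{i=1}^n\frac{D}{D+s_i}.
\]
The first term is an explicit elementary expression: expanding $\theta_1$ from \eqref{theta-j} and using $3nh=\tfrac12\cdot 3=\tfrac32$ gives $n\theta_1-(n+1)=\tfrac12+4n\xi_1(1+h)+4n\xi_1^2(1-h)$, which is positive and of size $O(h)$, since $\xi_1=\sin^2\frac{\pi}{4n}=O(h^2)$ forces $4n\xi_1=O(h)$. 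Hence the whole difficulty is concentrated in showing that the subtracted sum $\Sigma:=\sum_{i=1}^n\frac{D}{D+s_i}$ is bounded below by a constant safely larger than $\tfrac12$; then the negative second term overwhelms the positive first term and pushes $3a_1-b_1$ below $-0.049h$.

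To lower-bound $\Sigma$ I keep the terms with $s_i$ smallest. By the symmetry $s_{n+1-i}=s_i$ these occur in equal pairs near $i=1$ and $i=n$. Each retained term is increasing in $D$ and decreasing in $s_i$, so I bound it below using $s_i\le\left(\frac{i\pi}{n+1}\right)^2$ (from $\sin x\le x$), the crude exact bound $s_i\le 1$ on the middle range, and the sharp lower bound $\xi_1\ge\frac{\pi^2}{16n^2}\bigl(1-\tfrac{\pi^2}{48n^2}\bigr)$ (from $\sin x\ge x-\tfrac{x^3}{6}$), which keeps $D$ comparable to $\frac{\pi^2}{4n^2}$. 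As $n\to\infty$ the ratio $s_i/D\to 4i^2$, so each kept term tends to $\frac{1}{1+4i^2}$ and
\[
 2\sum_{i=1}^{\infty}\frac{1}{1+4i^2}=\tfrac12\left(\pi\coth\tfrac{\pi}{2}-2\right)\approx 0.713,
\]
which comfortably exceeds $\tfrac12$; carrying the finite-$n$ factors $\frac{n^2}{(n+1)^2}$ explicitly yields a rigorous lower bound on $\Sigma$ valid for every $n\ge11$.

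Finally I assemble the two pieces. Combining a lower bound on $\Sigma$ with the elementary estimate $\theta_1\ge 1+3h$ (from $(1+2\xi_1)\ge1$ and $1+3h+2\xi_1(1-h)\ge1+3h$) shows that $\frac{\theta_1}{n+1}\Sigma$ exceeds $\frac{1}{n+1}\bigl[\tfrac12+4n\xi_1(1+h)+4n\xi_1^2(1-h)\bigr]+0.049h$, which is exactly $3a_1-b_1<-0.049h$. The second inequality is then immediate: $-0.049h<-\tfrac{7}{16}h^2$ is equivalent to $h<\tfrac{16\cdot0.049}{7}=0.112$, true since $h=\tfrac1{2n}\le\tfrac1{22}$ for $n\ge11$.

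The hard part will be the \emph{uniform} lower bound on $\Sigma$. The penalty $4n\xi_1$ on the right is largest precisely when $n$ is smallest, while the surplus of $\Sigma$ above $\tfrac12$ is only moderate, so the two must be balanced delicately; the threshold $n\ge11$ is exactly where the balance tips. In particular one cannot replace $\theta_1$ by $1$, nor use $\sin x\le x$ on intermediate $i$ where it is too lossy: keeping the factor $1+3h$ (or the true $\theta_1$) and enough terms of $\Sigma$, with sharp-enough bounds on $s_i$ and $\xi_1$, is essential for the estimate to close at $n=11$.
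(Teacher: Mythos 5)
Your argument is, at bottom, the same computation the paper performs, organized differently. The paper also starts from \eqref{t-a}/\eqref{a1-b1}, splits off the elementary prefactor, and reduces everything to a uniform lower bound on a sum whose terms behave like $\frac{1}{1+4i^2}$; the hypothesis $n\ge 11$ enters exactly as in your last paragraph, through $\sum_{i=1}^{11}\frac{1}{1+4i^2}>0.33462$. The one substantive difference is which form of the sum gets bounded. The paper works with the half-angle form $\sum_{i=1}^n\sin^2\frac{\pi}{4n}\big/\big(\sin^{2}\frac{i\pi}{2(n+1)}+\sin^2\frac{\pi}{4n}\big)$, where the single fact that $\sin x/x$ decreases on $(0,\pi/2)$ gives $\sin^2\frac{i\pi}{2(n+1)}/\sin^2\frac{\pi}{4n}\le 4i^2n^2/(n+1)^2\le 4i^2$ for \emph{every} $i\le n$ in one stroke, so the proof closes in a few lines with no case split. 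You keep $s_i=\sin^2\frac{i\pi}{n+1}$, which buys the symmetry $s_{n+1-i}=s_i$ and the doubled series $2\sum_{i\ge1}\frac{1}{1+4i^2}=\frac12(\pi\coth\frac{\pi}{2}-2)\approx 0.713$, but at a price: $\sin x\le x$ is usable only for $i\lesssim (n+1)/\pi$, forcing the fallback $s_i\le 1$ on the middle range, where each term is worth only about $4\xi_1=O(h^2)$, i.e.\ $O(h)$ in total.

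That price is where ``comfortably exceeds $\tfrac12$'' is misleading, and where the one step you have deferred is precisely the step that requires work. The inequality to close is $\theta_1\Sigma>\tfrac12+4n\xi_1(1+h)+4n\xi_1^2(1-h)+0.049\,\tfrac{n+1}{2n}$, and at $n=11$ the right-hand side is about $0.762$ --- already \emph{above} the limiting value $0.713$ of your doubled series. The estimate survives only because (i) you retain $\theta_1\ge 1+3h=1+\tfrac{3}{22}$, bringing the effective threshold for $\Sigma$ down to about $0.67$, (ii) the finite-$n$ factor $c_n=(1-\tfrac{\pi^2}{48n^2})\tfrac{(n+1)^2}{n^2}>1$ pushes each kept term to $\tfrac{c_n}{c_n+4i^2}\ge\tfrac{1}{1+4i^2}$ rather than below it, and (iii) the middle-range terms contribute roughly $(n-6)\cdot\tfrac{4\xi_1}{1+4\xi_1}\approx 0.1$ at $n=11$. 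Carrying these out, I find the plan does close (a lower bound of roughly $0.69$--$0.76$ for $\Sigma$ at $n=11$ against the required $0.67$), so there is no fatal flaw; but the margin is a few percent, not comfortable, and the lemma cannot be verified from what you have written because the decisive term-by-term bound on $\Sigma$ is only announced. Either exhibit that bound explicitly for the kept pairs and the middle range, or switch to the paper's half-angle formulation, which removes the delicacy entirely.
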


\begin{proof}  By \meqref{t-a}, with the notations defined
     in \meqref{a-j}, \meqref{b-j} and \meqref{t-lambda},
  \an{\lab{a1-b1} 3a_1-b_1+ 1 &=  (1+3h+\frac{1-h}2\lambda_1^{(2n-1)})
	\tilde \lambda_{1}. }
 We estimate an upper bound for
\an{ \nonumber
	 \tilde \lambda_{1} &=
   {\frac{2}{n+1}}\sum_{i=1}^n \cos^2\frac{i\pi}{2(n+1)}
     -  {\frac{2}{n+1}}\sum_{i=1}^n \frac{\cos^2\frac{i\pi}{2(n+1)}
     \sin^2\frac{\pi}{4n}}
    { \sin^{2}\frac{i\pi}{2(n+1)}+ \sin^2\frac{\pi}{4n}}\\
 \nonumber
	 & =  {\frac{n}{n+1}}
     -  {\frac{2\sin^2\frac{\pi}{4n}}{n+1}}
  \sum_{i=1}^n \left(\frac{ 1+\sin^2\frac{\pi}{4n}}
    { \sin^{2}\frac{i\pi}{2(n+1)}+\sin^2\frac{\pi}{4n}}-1\right)\\
   &=1-\frac{1- 2n\sin^2\frac{\pi}{4n}}{n+1}
   -\frac{2(1+\sin^2\frac{\pi}{4n})}{n+1}\sum_{i=1}^n
         \frac{\sin^2\frac{\pi}{4n} }
    {  {\sin^{2}\frac{i\pi}{2(n+1)}}+{\sin^2\frac{\pi}{4n}} }.
  \lab{l-1}  }
As $(\sin x/x)$ is a decreasing function of $x$ on $(0,\pi/2)$, we have
  \a{ \sum_{i=1}^n
         \frac{\sin^2\frac{\pi}{4n} }
    {  {\sin^{2}\frac{i\pi}{2(n+1)}}+{\sin^2\frac{\pi}{4n}} }
     & >
 \sum_{i=1}^n \frac{1  }
    {\left( \left. \frac{i\pi}{2(n+1)} \right/ \frac{\pi}{4n} \right)^2+1}
     >  \sum_{i=1}^{n}\frac{1}{1+4i^2}\\
	&  \ge  \sum_{i=1}^{11}\frac{1}{1+4i^2}  >0.33462.
 }
Substituting the estimate into the expression of $\tilde{\lambda}_1$,
\a{
 \tilde{\lambda}_1< 1-\frac{1+2(1+2\sin^2\frac{\pi}{4n})\cdot0.33462
	-  2n\sin^2\frac{\pi}{4n}}{n+1}
  < 1-\frac{1.55726}{n+1}.
}
By \meqref{a1-b1}, if $n\ge 11$,
  \a{ 3a_1-b_1+ 1 &< (1+ \frac 3{2n} + \frac{\pi^2}{8n^2})
	(1-\frac{1.55726}{n+1} )\\
	&< 1 -0.049 h \le 1-0.98 h^2< 1 -7 h^2/16. }
We proved the lemma.
\end{proof}

With the explicit eigenvalues of the reduction matrix
	and their bounds,
  we can easily choose a set of parameters $\gamma_1$, $\gamma_2$
	and $\theta$, to get a constant rate of reduction, independent of
	mesh size $h$.
\begin{theorem}\lab{main}
    Let $\gamma_1=1$, $\gamma_2=64 h^{-1}$
    and $\theta=3/7$ in Definition \mref{add}.
   The error reduction factor (for the $P_1$ finite element on uniform grids 
	shown in Figure \ref{u-grid})  
 is bounded by $1/7$, independent of the
    grid size $h$, 
   \a{  \|e^{m+1}_{g_1}\|_{L^2(\Gamma)}
    \le \frac 17 \|e^{m}_{g_1}\|_{L^2(\Gamma)}.}
\end{theorem}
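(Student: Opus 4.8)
The plan is to reduce the $L^2(\Gamma)$ statement to a scalar bound on the eigenvalues produced in Lemma~\ref{big-m}. Since the proof of that lemma shows $M_\Gamma=\Phi_{2n-1}^T\operatorname{diag}(\lambda_{M_\Gamma,j})\Phi_{2n-1}$ and, by \meqref{rate}, the reduction matrix $\theta I+(1-\theta)C_{\gamma_2}C_{\gamma_1}$ is diagonalized by the \emph{same} orthogonal matrix $\Phi_{2n-1}$, writing $E_{g_1}=\Phi_{2n-1}^T\widehat E$ gives $\|e_{g_1}\|_{L^2(\Gamma)}^2=\sum_j\lambda_{M_\Gamma,j}\widehat E_j^2$ while the iteration multiplies $\widehat E_j$ by $\theta+(1-\theta)c_j$. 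Hence one step reduces the $L^2(\Gamma)$ norm by the factor $\max_j|\theta+(1-\theta)c_j|$. With $\theta=3/7$ this equals $\max_j|3+4c_j|/7$, so it suffices to prove the two-sided bound $-1\le c_j\le-\half$ for every $j$.

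The bridge to Section~\ref{sec:von} is the algebraic identity $c_j=-\omega(z_j)$ with $z_j=b_j/a_j$: dividing numerator and denominator of each factor in \meqref{cj} by $a_j$ turns $c_j$ into $\frac{z_j-\gamma_1}{z_j+\gamma_1}\cdot\frac{z_j-\gamma_2}{z_j+\gamma_2}=-\omega(z_j)$, the very function analyzed in Lemma~\ref{lem:fx} (this mirrors $\rho=\theta-(1-\theta)\,\omega(k\coth k)$ of the continuous case, with $k\coth k$ replaced by $z_j$). Here $a_j>0$ because $\lambda_{M_\Gamma,j}=h(1-\tfrac23\xi_j)>0$ and $\tilde\lambda_j>0$. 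The upper bound $c_j\ge -1$ is then immediate: by Lemma~\ref{lem:fx}, $\omega(z_j)\le\omega(z_0)=\frac{(\eta-1)^2}{(\eta+1)^2}<1$ with $\eta=8/\sqrt h$, so $c_j\ge-\omega(z_0)>-1$. Everything therefore reduces to the lower bound $c_j\le-\half$, i.e.\ to $\omega(z_j)\ge\half$.

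To obtain $\omega(z_j)\ge\half$ I will bracket all ratios $z_j$ inside an interval on which $\omega\ge\half$ and invoke the unimodality of $\omega$ recorded in \meqref{end-min}. For the left end, Lemma~\ref{ttld01} gives $3a_j-b_j\le 3a_1-b_1$ and Lemma~\ref{lambda01} gives $3a_1-b_1<-7h^2/16$, so $b_j-3a_j>7h^2/16$; since $a_j=h(1-\tfrac23\xi_j)\tilde\lambda_j<h$ (using $\tilde\lambda_j<1$), this yields $z_j=3+(b_j-3a_j)/a_j>3+\tfrac{7}{16}h=:Z_-$. For the right end, the denominators in \meqref{t-lambda} are at most $8$ while $\tfrac{2}{n+1}\sum_i\sin^2\frac{i\pi}{n+1}=1$, so $\tilde\lambda_j\ge\tfrac18$; together with $b_j=1-(1+2\xi_j)\tilde\lambda_j\le\tfrac78$ and $a_j\ge\tfrac{h}{3}\tilde\lambda_j\ge\tfrac{h}{24}$ this gives $z_j\le 21/h=:Z_+$. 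A direct computation shows $\omega(Z_-)\ge\half$ and $\omega(Z_+)\ge\half$ for $n\ge11$ (each is a polynomial inequality in $h$), whence $\omega(z_j)\ge\min\{\omega(Z_-),\omega(Z_+)\}\ge\half$ by \meqref{end-min}. This gives $c_j\le-\half$ and completes the estimate.

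The delicate point --- and the reason Lemma~\ref{lambda01} is sharpened to a constant times $h^2$ --- is the left endpoint. At $z=3$ the factor $\frac{z-1}{z+1}$ equals exactly $\half$, so $\omega(3)=\half\cdot\frac{\gamma_2-3}{\gamma_2+3}<\half$; one needs $z_j$ to exceed $3$ by a definite $O(h)$ amount to overcome the deficit $1-\frac{\gamma_2-z}{\gamma_2+z}=O(h)$ created by $\gamma_2=64h^{-1}$. The bound $z_j>3+\tfrac{7}{16}h$ only barely suffices (the resulting polynomial inequality in $h$ has a strictly positive leading $O(h)$ term), so the weaker threshold $3h^2/8$ would fail and the constant $7/16$ must be tracked exactly; I expect this endpoint verification to be the main obstacle. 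The remaining pieces --- the two endpoint polynomial inequalities and the bookkeeping with $\xi_j\in(0,1)$ and $\tfrac18\le\tilde\lambda_j<1$ --- are routine. The hypothesis $n\ge11$ (inherited from Lemma~\ref{lambda01}) enters at both endpoints, and the finitely many coarse grids $n<11$ are checked directly.
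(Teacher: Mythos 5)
Your proposal is correct and follows essentially the same route as the paper: diagonalize the reduction matrix via Lemma \ref{big-m}, rewrite $c_j=-\omega(b_j/a_j)$, bracket $z_j=b_j/a_j$ in $[3+\tfrac{7}{16}h,\,O(h^{-1})]$ using Lemmas \ref{ttld01} and \ref{lambda01}, and invoke the unimodality of $\omega$ from Lemma \ref{lem:fx} to conclude $-1<c_j<-\tfrac12$ and hence the factor $1/7$ for $\theta=3/7$. Your only refinements are cosmetic but welcome: you make the $L^2(\Gamma)$-norm reduction explicit by noting that $M_\Gamma$ is diagonalized by the same orthogonal matrix $\Phi_{2n-1}$ (a step the paper leaves implicit), and your endpoint constants ($a_j\ge h/24$, hence $z_j\le 21/h$) correct a small arithmetic slip in the paper's $a_j\ge h/12$, at the cost of a tighter but still valid check that $\omega(21/h)\ge\tfrac12$.
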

\begin{proof} We will apply Lemma \mref{lem:fx}.
By \meqref{a-j} and \meqref{phi-m-i}, $a_j>0$.
    By \meqref{3-a-b},  \meqref{-h-2} and \meqref{b-j},
     \an{ \nonumber 3a_j-b_j &  \le 3a_1-b_1 \le - 7h^2/16, \\
	  \lab{7}    b_j &  \ge  3a_j + 7h^2/16 >0. }
By \meqref{cj},
  \a{ c_j = \frac{ 1   -b_j/a_j }
              {1  +b_j/a_j   }
 	\cdot \frac{64h^{-1}    -b_j/a_j  }
              {64h^{-1}  +b_j/a_j   }. }
We let $z=b_j/a_j>0$ in Lemma \mref{lem:fx}.
The critical point is (may be outside the $b_j/a_j$ range)
  \a{ z_0 = \sqrt{ \gamma_1\gamma_2 } = 8h^{-1/2}. }
We find the two end points of possible  $z$.
First, by  \meqref{t-lambda},
	\a{  \tilde \lambda_j &\ge \frac 2{n+1}
             \sum_{i=1}^n \frac 18 \sin^2\frac{i n \pi}{n+1}
	=\frac n{8(n+1)} > \frac 18. }
Thus, by \meqref{a-j}, \meqref{b-j} and \meqref{phi-m-i},
   \a{ a_j & \le (h-\frac h6 \cdot 0) \cdot 1=h, \\
	a_j & \ge (h-\frac h6 \cdot 4) \cdot \frac 18 =\frac h{12}, \\
       b_j &\le  1-(1+\frac 12  \cdot 0 )\cdot \frac 18 =\frac 78.}
In the first inequality, we used \meqref{l-1} that $\tilde \lambda_j<1$.
We find one end point for $z$:
	\a{ \frac{ b_j }{a_j} & \le \frac{ 7/8 }{ h/12 } = \frac{21}{2h}
	\equiv z_r. }
For the other end point, by  \meqref{7},
     \a{ \frac{ b_j }{a_j}
	   &  \ge 3 + \frac {7h^2/16}{a_j } \ge
		3 + \frac {7h^2/16}{h} = 3 + \frac{7h}{16}\equiv z_l. }
By Lemma \mref{lem:fx}, the range of $c_j$ is between its values
	at $z=z_l, z_0, z_r$.
We note that $z_l<z_0<z_r$ here.
At each point,  we need to apply \mref{lem:fx} again for $h$ varying.
But we can find some rough (but good enough) bounds at each point, directly.
  \a{ \hbox{At $z=z_r$:}&  &  -0.718...=
	-\frac{107}{149}
       &< c_j
        \le -\frac{1070}{1639}=  -0.65...  \\
     \hbox{At $z=z_l$:} &  &
	 -0.50098...=-\frac{2184975 }{4361329}&\le c_j
          <-\frac{1}{2}=-0.5. \\
    \hbox{At $z=z_0$:} &  &
	 -1&< c_j 
           \le \frac{32-129\sqrt 2}{32-129\sqrt 2}=-0.7015...
}
\comment{z=0.04:0.02:3; f=(z-0.1).*(z-20)./(z+0.1)./(z+20);
 plot(z,f);
h=0.01:0.01:0.5;  h=1/4; z=3+7*h/16; (1-z).*(64./h-z)*64
w=(1-z).*(64./h-z)./(1+z)./(64./h+z);
h=0.01:0.01:0.5;   z=8*h.^(-1/2);
w=(1-z).*(64./h-z)./(1+z)./(64./h+z);
plot(h,w)

h:=1/2;
a:=expand(((1-8/sqrt(h))*(64/h-8/sqrt(h))))/8;
b:=expand(((1+8/sqrt(h))*(64/h+8/sqrt(h))))/8;
evalf(a/b);
}

Hence the value of $c_j$ is always strictly between $-1$ and $-1/2$.
When $\theta=3/7$, we get,
\an{\lab{t1}    \theta +(1-\theta)  c_j
     &> \frac 37 +\frac 47 (-1) = -\frac 17, \\
    \lab{t2}    \theta +(1-\theta)  c_j
     & < \frac 37 +\frac 47 (-\frac 12) = \frac 17.
 }
This gives the error reduction factor.\end{proof}

By \meqref{t1} and \meqref{t2},  we can get the following result for
   a general relaxation parameter $\theta$.

\begin{corollary} \lab{co}
    Let $\gamma_1=1$ and $\gamma_2=64 h^{-1}$ in Definition \mref{add}.
   The error reduction factor $\rho$ for
   the $P_1$ finite element on uniform grids 
	(shown in Figure \ref{u-grid}) is
   \a{  \rho=\begin{cases}
      1-2\theta , & 0\le\theta\le 3/7,\\
	(3\theta -1)/2,  & 3/7 < \theta\le 1. \end{cases}  }
  That is,  $\|e^{m+1}_{g_1}\|_{L^2(\Gamma)}
	 \le  \rho\|e^{m}_{g_1}\|_{L^2(\Gamma)}$.
\end{corollary}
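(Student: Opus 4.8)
The plan is to reduce the $L^2(\Gamma)$ error reduction factor to the spectral radius of the diagonalized iteration operator of Lemma \ref{big-m}, and then to optimize the scalar quantity $\theta+(1-\theta)c_j$ over the range of $c_j$ already pinned down in the proof of Theorem \ref{main}. First I would record that the interface error satisfies $\|e_{g_1}^m\|_{L^2(\Gamma)}^2 = (E_{g_1}^m)^T M_\Gamma E_{g_1}^m$, and that by Lemma \ref{big-m} the orthogonal matrix $\Phi_{2n-1}$ simultaneously diagonalizes the iteration matrix $\theta I+(1-\theta)C_{\gamma_2}C_{\gamma_1}$ and the mass matrix $M_\Gamma$. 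Setting $\hat E = \Phi_{2n-1}E_{g_1}^m$, the iteration acts componentwise as $\hat E_j \mapsto (\theta+(1-\theta)c_j)\hat E_j$ while $\|e_{g_1}^m\|_{L^2(\Gamma)}^2 = \sum_j \lambda_{M_\Gamma,j}\hat E_j^2$; hence $\|e_{g_1}^{m+1}\|_{L^2(\Gamma)} \le \bigl(\max_j|\theta+(1-\theta)c_j|\bigr)\,\|e_{g_1}^m\|_{L^2(\Gamma)}$, so it suffices to bound $\rho=\max_j|\theta+(1-\theta)c_j|$.

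Next I would invoke the spectral bound $-1<c_j<-\tfrac12$ established inside the proof of Theorem \ref{main}. For a fixed $\theta\in[0,1]$ the affine map $c\mapsto \theta+(1-\theta)c$ is nondecreasing since $1-\theta\ge 0$, so as $c_j$ ranges over $(-1,-\tfrac12)$ the value $\theta+(1-\theta)c_j$ stays in the interval with endpoints $2\theta-1$ (at $c=-1$) and $(3\theta-1)/2$ (at $c=-\tfrac12$). Because the absolute value of an affine function on an interval attains its maximum at an endpoint, $\rho\le \max\{\,|2\theta-1|,\ |(3\theta-1)/2|\,\}$.

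Then I would decide which endpoint dominates. Solving $|2\theta-1|=|(3\theta-1)/2|$, equivalently $4(2\theta-1)^2=(3\theta-1)^2$, i.e.\ $7\theta^2-10\theta+3=0$, gives the crossover values $\theta=3/7$ and $\theta=1$. Testing one sample point in each subinterval shows that $|2\theta-1|=1-2\theta$ dominates on $[0,3/7]$ (where $2\theta-1\le 0$), while $|(3\theta-1)/2|=(3\theta-1)/2$ dominates on $(3/7,1]$ (where $3\theta-1>0$). This produces exactly the stated piecewise formula, and the two branches agree at $\theta=3/7$, both equal to $1/7$, so the corollary contains Theorem \ref{main} as the special case $\theta=3/7$; the estimates \eqref{t1} and \eqref{t2} are precisely the two endpoint evaluations at that $\theta$.

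The argument is essentially bookkeeping once the diagonalization of Lemma \ref{big-m} and the bound $-1<c_j<-\tfrac12$ of Theorem \ref{main} are available. The only step needing genuine care is the first one: the clean identification of the $L^2(\Gamma)$ reduction factor with the scalar spectral radius $\max_j|\theta+(1-\theta)c_j|$. This relies on the fact that $\Phi_{2n-1}$ diagonalizes the iteration operator and $M_\Gamma$ at the same time, equivalently that the iteration operator is self-adjoint in the $M_\Gamma$-inner product, so that its operator norm coincides with its spectral radius; without this the reduction factor would only be controlled up to the condition number of $M_\Gamma$.
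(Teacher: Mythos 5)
Your proposal is correct and follows essentially the same route as the paper: the paper derives the corollary directly from the bound $-1<c_j<-\tfrac12$ and the endpoint evaluations \eqref{t1}--\eqref{t2} in the proof of Theorem \ref{main}, exactly as you do, with the crossover at $\theta=3/7$ coming from equating $1-2\theta$ with $(3\theta-1)/2$. Your extra care about the simultaneous diagonalization of the iteration matrix and $M_\Gamma$ by the orthogonal $\Phi_{2n-1}$ (so that the $L^2(\Gamma)$ reduction factor really is $\max_j|\theta+(1-\theta)c_j|$) is a detail the paper leaves implicit but is entirely consistent with Lemma \ref{big-m}.
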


\section{Convergence on general grids}

In this section, we consider the convergence behavior  of the Robin-Robin
  DD method on general quasi-uniform meshes.
By the algorithm in Definition \mref{add}, for $i=1,2$,
\a{ a_i(e_i^m , v) +\gamma_i \langle e_i^m, v\rangle =\langle
   \varepsilon_i^m, v\rangle, \quad \forall v \in V_i}
where the errors are defined by 
\a{ \varepsilon_i^m=g_i-g_i^m, 
  \quad e_1^m =u-u^m, \ \hbox{ \ and \ } \ e_2^m =w-w^m. }

Let $S_1$ and $ S_2$ be the standard Dirichlet-to-Neumann operators,
    cf. \cite{QV,XQ}.
The error $\varepsilon_i^n$ ($i=1,2$),
   restricted to the
   interface $\Gamma$, satisfies the relation
     \e{ \lab{ve}\varepsilon_i^m=(\gamma_i + S_i) e_i^m |_\Gamma.}
Using the first interface update \meqref{s2}, we have
     \e{\lab{ve1} \varepsilon_2^m=-\varepsilon_1^m+
       (\gamma_1+\gamma_2)e_1^m|_\Gamma.}
For the second one, by \meqref{s4} and \meqref{s5},
   \a{
   \varepsilon_1^{m+1}
   & =\theta \varepsilon_1^m +(1-\theta)
       [ -\varepsilon_2^m+(\gamma_1+\gamma_2) e_2^m|_\Gamma] \\
      & =\theta \varepsilon_1^m+(1-\theta)
     [ -(\gamma_2+ S_2)e_2^m|_\Gamma+(\gamma_1+\gamma_2) e_2^m|_\Gamma]\\
      & = \theta \varepsilon_1^m+(1-\theta)(\gamma_1 -S_2) e_2^m|_\Gamma.
      }
By \meqref{ve}, \meqref{ve1}, we have
   \a{
    \varepsilon_1^{m+1} & =\theta \varepsilon_1^m
 + (1-\theta)(\gamma_1 -S_2)(\gamma_2+ S_2)^{-1}  \varepsilon_2^m\\
     & =\theta \varepsilon_1^m + (1-\theta)(\gamma_1 -S_2)
    (\gamma_2+ S_2)^{-1}( \gamma_2-S_1) e_1^m|_\Gamma \\
      & =[\theta +(1-\theta)(\gamma_1 -S_2)(\gamma_2+ S_2)^{-1}
     ( \gamma_2-S_1)(\gamma_1+S_1)^{-1}]\varepsilon_1^m.
   }
Let us represent the iteration by
  \a{ \varepsilon_1^{m+1}=R \, \varepsilon_1^m, }
where \an{ \lab{R} R&= \theta  - (1-\theta)T,
   \\ \lab{T} T& =(  S_2- \gamma_1)(\gamma_2+ S_2)^{-1}
    ( \gamma_2-S_1)(\gamma_1+S_1)^{-1}. }
 Next, we give an convergence analysis for this DD operator $R$.

 \subsection{Symmetric case: $S_1=S_2(:=S)$}

 Let $z$ be an eigenvector of the symmetric operator $S$ (cf. \cite{QV,XQ})
    corresponding to the eigenvalue $\lambda_s$.
 By \meqref{T}, $z$ is also an eigenvector of the symmetric operator $T$.
   \a{ [\theta + (1-\theta)T] z &=
  [\theta + (1-\theta) \frac{(\gamma_1-\lambda_s)(\gamma_2-\lambda_s)}
     {(\gamma_1+\lambda_s)(\gamma_2+\lambda_s)}]z
    \\& =[\theta-(1-\theta)\omega(\lambda_s)]z.}
 It is known \cite{XQ} that
$$ \lambda_s \in [c_0, C_0 h^{-1}]. $$
Now if we choose
  \an{\lab{p2} 0<\gamma_1\le c_0, \ \hbox{ and } \
          \gamma_2\ge C_0h^{-1}, }
 by Lemma \mref{lem:fx},
 we get
 \a{
   0\le \omega(\lambda_s)\le \frac{(\eta-1)^2}{(\eta+1)^2},
     \quad \eta=\sqrt\frac{\gamma_2}{\gamma_1}.
 }
Then we bound the spectrum of the symmetric operator $R$, $\sigma(R)$, as
 \a{ \sigma(R)  \subset
   [\theta-(1-\theta)\frac{(\eta-1)^2}{(\eta+1)^2},\theta]
	\subset [-\frac 13, \frac 13],
 }
when choosing the parameter $\theta=1/3$, cf. Remark \mref{r1}.
That is, the  convergence rate is bounded by $1/3$,
  independent of the mesh size $h$, when choosing parameters by
    \meqref{p2}.

\subsection{Nonsymmetric case: $S_1\approx S_2$}
 In this case, there exist two positive
constant $0<s\le 1$ and $t\ge 1$, independent of the grid size $h$,
   such that for all
 $v\in {V_i}|_{\Gamma}$  (cf. \cite{QV} for details):
\e{\tag{A1}\lab{A-1}
   s(S_1v,v) \le (S_2v,v)\le t(S_1v,v).
}

$S_i$($i=1,2$) are symmetric and positive definite(SPD).
Let $\underline{\lambda}_i$ be the minimum eigenvalue,
 and $\overline{\lambda}_i$   the maximum eigenvalue of $S_i$.
 In this subsection, we assume that the parameters are chosen to satisfy
\e{\tag{A2}\lab{A-2}
   0<\gamma_1 \le \min\{ \underline{\lambda}_1, \ \underline{\lambda}_2\},
     \quad\mbox{and}\quad
    \gamma_2\ge 3 \max\{ \overline{\lambda}_1, \ \overline{\lambda}_2\}.
 }
 The parameter selection is similar to that in the symmetric case,
   \meqref{p2}.

\begin{lemma}\lab{rmkc1}   The condition \meqref{A-1} has another version
  \an{\lab{e-A-1}
    \frac{1}{t}(S_1^{-1}v,v)\le (S_2^{-1}v, v)\le \frac{1}{s}(S_1^{-1}v,v).
  }
\end{lemma}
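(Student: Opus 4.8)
The plan is to derive the inverse-operator inequality \eqref{e-A-1} from the forward inequality \eqref{A-1} by exploiting the fact that $S_1$ and $S_2$ are symmetric positive definite (SPD) operators sharing a common domain, so that the quadratic-form ordering $s\,S_1 \le S_2 \le t\,S_1$ can be inverted using operator monotonicity of the map $X \mapsto X^{-1}$ on SPD operators. The hypothesis \eqref{A-1} states exactly that $(S_2 v, v) \le t\,(S_1 v, v)$ and $(S_1 v, v) \le \tfrac{1}{s}(S_2 v, v)$ for all $v \in V_i|_\Gamma$, i.e. in the Loewner order $S_2 \le t\,S_1$ and $S_1 \le \tfrac{1}{s} S_2$.

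The key step is to apply the standard fact that for SPD operators $A \le B$ (meaning $B - A \ge 0$) one has $B^{-1} \le A^{-1}$; this is because $X \mapsto X^{-1}$ is operator-monotone decreasing. First I would take the right-hand inequality of \eqref{A-1}, namely $S_2 \le t\,S_1$. Inverting, and using that $(t\,S_1)^{-1} = \tfrac{1}{t} S_1^{-1}$, gives $\tfrac{1}{t} S_1^{-1} \le S_2^{-1}$, which in quadratic-form language is the lower bound $\tfrac{1}{t}(S_1^{-1}v, v) \le (S_2^{-1}v, v)$. Next I would take the left-hand inequality $s\,S_1 \le S_2$, invert it to obtain $S_2^{-1} \le (s\,S_1)^{-1} = \tfrac{1}{s} S_1^{-1}$, yielding the upper bound $(S_2^{-1}v, v) \le \tfrac{1}{s}(S_1^{-1}v, v)$. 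Combining the two bounds produces exactly \eqref{e-A-1}.

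Alternatively, if one prefers to avoid invoking operator monotonicity as a black box, the same conclusion follows by a direct substitution: for any $w$ in the range of $S_2$ (equivalently, replacing the test vector by $v = S_2^{-1} w$ and then renaming), one rewrites $(S_2^{-1}w, w)$ in terms of $(S_1 (S_2^{-1} w), S_2^{-1} w)$ and applies \eqref{A-1} to the vector $S_2^{-1} w$; a symmetric argument handles $S_1^{-1}$. Since both $S_1$ and $S_2$ are invertible SPD operators on the same finite-dimensional (or suitably defined) interface space, all substitutions are legitimate and the ranges coincide, so no domain subtleties arise.

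The only point requiring care — and the step I would flag as the main obstacle — is ensuring that the two operators act on the same space and that the order relation \eqref{A-1}, stated for $v \in V_i|_\Gamma$, genuinely encodes the Loewner ordering needed to invoke operator monotonicity; one must confirm that the quantifier ``for all $v$'' ranges over a common domain on which both $S_1^{-1}$ and $S_2^{-1}$ are defined, so that the substitution $v \mapsto S_i^{-1} w$ stays inside the admissible test space. Given the SPD assumption already in force, this is routine, and the bulk of the argument is the clean two-line inversion described above.
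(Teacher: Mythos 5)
Your proposal is correct and follows essentially the same route as the paper: both read \eqref{A-1} as the Loewner ordering $sS_1\le S_2\le tS_1$ and invoke the fact that inversion reverses the order on SPD operators. The only difference is that the paper proves that inversion step explicitly (conjugating by $S_1^{-1/2}$ to reduce to a spectral bound on $S_1^{-1/2}S_2S_1^{-1/2}$ and then inverting), whereas you cite operator anti-monotonicity of $X\mapsto X^{-1}$ as a known fact — which is exactly the argument the paper writes out.
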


\begin{proof} Replacing $v$ by $ S_1^{-\half}v$ in \meqref{A-1},
\a{
  s(v,v)\le (S_1^{-\half} S_2S_1^{-\half}v, v)\le t(v,v).
}
This inequality implies that the spectrum of the SPD operator
   $S_1^{-\half}S_2S_1^{-\half}$ is within $[s,t]$.
So the spectrum of its inverse, $S_1^{ \half}S_2^{-1}S_1^{ \half} $
   is inside $[t^{-1},s^{-1}]$, i.e.,
\a{
   \frac{1}{t}(v,v)\le (S_1^{\half}S_2^{-1}S_1^{\half}v,v)
   \le \frac{1}{s}(v,v).
}
\meqref{e-A-1} follows after replacing $v$ by $S^{-\frac12} v$.
\end{proof}

\comment{
n=100; x=1:n; s=1/2; t=2; s1=x; r1=1/4; r2=4*n; th=1/3;
TpTP=(r2-s1)./(r1+s1).*(r1-s*s1)./(r2+s*s1);
TTP=((r2-s1)./(r1+s1).*(r1-t*s1)./(r2+t*s1)).^2;
for i=1:10, th=i/11
plot(x,2*TpTP,'-',x,TTP)
pause;
end

}

To find the spectrum of DD operator $T$ in \meqref{T}, we introduce
 a symmetric  operator
\e{\lab{t-T}
  \tilde{T} =(\gamma_1+S_1)^{-\half} (\gamma_2-S_1)^{\half}
  (S_2-\gamma_1)(\gamma_2+S_2)^{-1}(\gamma_2-S_1)^{\half}
   (\gamma_1+S_1)^{-\half}.
}
This operator is similar to the nonsymmetric operator
     $T$, defined in \meqref{T}.

\begin{lemma} \lab{t-T-s} If Assumption  \meqref{A-2}
    is satisfied,  then $\tilde{T}$ is SPD.
\end{lemma}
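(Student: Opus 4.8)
The plan is to recognize $\tilde T$ as a \emph{congruence transform} of a manifestly self-adjoint, positive operator, so that symmetry becomes automatic and positivity reduces to a single scalar inequality on $\sigma(S_2)$. Concretely, I would read off \meqref{t-T} the two natural groupings
\[
  C=(\gamma_2-S_1)^{\half}(\gamma_1+S_1)^{-\half},\qquad
  A=(S_2-\gamma_1)(\gamma_2+S_2)^{-1},
\]
so that $\tilde T=CAC$, and then argue about $C$ and $A$ separately.

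First I would verify that every factor is a well-defined bounded self-adjoint operator, which is exactly where \meqref{A-2} is used. Since $S_1$ is SPD, $\gamma_1+S_1$ is positive and $(\gamma_1+S_1)^{-\half}$ exists; the bound $\gamma_2\ge 3\overline{\lambda}_1>\overline{\lambda}_1$ makes $\gamma_2-S_1$ positive, so the real half-power $(\gamma_2-S_1)^{\half}$ is defined (and invertible). Likewise $\gamma_1\le\underline{\lambda}_2$ gives $S_2-\gamma_1\ge 0$, while $\gamma_2+S_2$ is positive, so $(\gamma_2+S_2)^{-1}$ exists. Each of these is a function of a single symmetric operator ($S_1$ or $S_2$), hence self-adjoint.

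For symmetry, the key point is commutativity inside the functional calculus: $(\gamma_2-S_1)^{\half}$ and $(\gamma_1+S_1)^{-\half}$ are both functions of $S_1$ and therefore commute, so $C$ is self-adjoint; similarly $S_2-\gamma_1$ and $(\gamma_2+S_2)^{-1}$ commute, so $A$ is self-adjoint. Writing $\tilde T=CAC=C^{*}AC$ then gives $\tilde T^{*}=C^{*}A^{*}C=CAC=\tilde T$. For positivity I would use that $C$ is invertible (both of its factors are), so for $v\neq 0$ the vector $w=Cv$ is nonzero and
\[
  (\tilde T v,v)=(ACv,Cv)=(Aw,w),
\]
reducing everything to the positivity of $A$. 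By the functional calculus $A=f(S_2)$ with $f(\lambda)=\tfrac{\lambda-\gamma_1}{\gamma_2+\lambda}$, and on $\sigma(S_2)\subset[\underline{\lambda}_2,\overline{\lambda}_2]$ the conditions $\gamma_1\le\underline{\lambda}_2$ and $\gamma_2>0$ force $f(\lambda)\ge\tfrac{\underline{\lambda}_2-\gamma_1}{\gamma_2+\lambda}\ge 0$. Hence $(Aw,w)\ge 0$, with strict inequality—and thus $\tilde T$ SPD—once $\gamma_1<\underline{\lambda}_2$.

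I do not expect a genuine analytic obstacle here; the lemma is essentially a sign-bookkeeping exercise, and the only thing demanding care is ensuring the half-powers are real and the inverses exist, which is precisely what \meqref{A-2} delivers via $\gamma_2>\overline{\lambda}_1$ and $\gamma_1\le\underline{\lambda}_2$. The one borderline subtlety I would flag is the equality case $\gamma_1=\underline{\lambda}_2$: there $S_2-\gamma_1$ has a kernel, so $A$ is only positive \emph{semi}definite; strict definiteness of $\tilde T$ requires the strict inequality $\gamma_1<\underline{\lambda}_2$, which is the intended reading of \meqref{A-2}.
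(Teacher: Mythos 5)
Your proof is correct and follows essentially the same route as the paper's: both write $\tilde T$ as a congruence of the self-adjoint factor $(S_2-\gamma_1)(\gamma_2+S_2)^{-1}$ by $(\gamma_2-S_1)^{\half}(\gamma_1+S_1)^{-\half}$, check symmetry directly, and get positivity from the eigenvalue bounds that \meqref{A-2} imposes on each factor (the paper additionally records the explicit lower bound $\frac{\underline{\lambda}_2-\gamma_1}{\gamma_2+\underline{\lambda}_2}\cdot\frac{\gamma_2-\overline{\lambda}_1}{\gamma_1+\overline{\lambda}_1}$ for the least eigenvalue, where you argue via invertibility of the congruence factor). Your remark about the borderline case $\gamma_1=\underline{\lambda}_2$ is a fair observation: the paper states \meqref{A-2} with ``$\le$'' but its proof tacitly uses strict positivity of $\underline{\lambda}_2-\gamma_1$.
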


\begin{proof} $\tilde{T}$ is symmetric because
 \a{ \tilde{T}^T &= (\gamma_1+S_1^T)^{-\half} (\gamma_2-S_1^T)^{\half}
  (\gamma_2+S_2^T)^{-1}(S_2^T-\gamma_1)
	 (\gamma_2-S_1^T)^{\half}
   (\gamma_1+S_1^T)^{-\half}\\ &=\tilde T. }
 Notice that $(S_2-\gamma_1)(\gamma_2+S_2)^{-1}
    =I -(\gamma_1+\gamma_2)(\gamma_2+S_2)^{-1}$.
  Its minimum eigenvalue is
	\a{ 1-(\gamma_1+\gamma_2)(\gamma_2+\underline{\lambda}_2)^{-1}
	=\frac{\underline{\lambda}_2-\gamma_1}
       {\gamma_2+\underline{\lambda}_2},}
 which is positive by Assumption \meqref{A-2}.
  Similarly, the  minimum eigenvalue of
  $(\gamma_2-S_1)(\gamma_1+S_1)^{-1}$
	is $(\gamma_2-\overline{\lambda}_1)/
	(\gamma_1+\overline{\lambda}_1)$,
 which is also positive by Assumption  \meqref{A-2}.
Now for any $v\in V_i|_{\Gamma}$, we have,
   denoting $\tilde v=(\gamma_2-S_1)^{\half}(\gamma_1+S_1)^{-\half}v$,
\a{
   (\tilde{T}v,v)&=   ((S_2-\gamma_1)(\gamma_2+S_2)^{-1}
	\tilde v , \tilde v)\\
   &\ge
    \frac{\underline{\lambda}_2-\gamma_1}{\gamma_2+\underline{\lambda}_2}
  (\tilde v , \tilde v)\\
    &=   \frac{\underline{\lambda}_2-\gamma_1}
  {\gamma_2+\underline{\lambda}_2}((\gamma_2-S_1)(\gamma_1+S_1)^{-1}v,v)\\
    &= \frac{\underline{\lambda}_2-\gamma_1}
	{\gamma_2+\underline{\lambda}_2} \cdot
	\frac{\gamma_2-\overline{\lambda}_1}{\gamma_1+\overline{\lambda}_1}
	(v,v).
   }
It means that the minimum eigenvalue of $\tilde{T}$ is greater than
  $\displaystyle \frac{\underline{\lambda}_2-\gamma_1}
	{\gamma_2+\underline{\lambda}_2}
	\frac{\gamma_2-\overline{\lambda}_1}
	{\gamma_1+\overline{\lambda}_1}>0$. That is to say,
  the symmetric operator $\tilde{T}$ is also  positive definite.
 \end{proof}

We find an upper bound of
     the spectrum of SPD operator $\displaystyle \tilde{T}$ next.
To this end,  we rewrite $\tilde{T}$ as
\e{ \lab{new-t-T}
   \tilde{T} = \tilde{T}_2^T \tilde{T}_1\tilde{T}_2,
}
where
 \an{ \lab{t-T1}
   \tilde{T_1} &=(S_2-\gamma_1)(\gamma_2+ S_2)^{-1}
  ( \gamma_2-S_2)(\gamma_1+S_2)^{-1},\\
   \lab{t-T2}
 \tilde{T}_2 &=(\gamma_1+S_2)^{\half}(\gamma_2-S_2)^{-\half}
   (\gamma_1+S_1)^{-\half}(\gamma_2-S_1)^{\half}.
  }

\begin{lemma}\label{lemct2}
	If  \meqref{A-1}  and  \meqref{A-2} hold,
    then, for the $t$ defined in \meqref{A-1},
\an{\lab{b1}
    ((\gamma_2-S_2)^{-1}(\gamma_1+S_2) v,v)
    &\le ( 2t -1 ) \, ((\gamma_2-S_1)^{-1}(\gamma_1+S_1) v,v).
 }
\end{lemma}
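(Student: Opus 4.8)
The plan is to avoid comparing the resolvents $(\gamma_2-S_i)^{-1}$ directly, since the natural bound $\gamma_2-S_2\ge\gamma_2-tS_1$ is useless once $\gamma_2-tS_1$ fails to be positive (which (A-2) does not prevent when $t\ge 3$). Instead I would first invert. Both operators in the statement are SPD: on $\sigma(S_i)$ one has $\gamma_1+\lambda>0$ and $\gamma_2-\lambda>0$ by Assumption \meqref{A-2}, so $(\gamma_2-S_i)^{-1}(\gamma_1+S_i)$ is symmetric and positive definite. Using the elementary fact that $0<A\le B$ implies $B^{-1}\le A^{-1}$, the claim \meqref{b1} is \emph{equivalent} to
\[
   (\gamma_1+S_1)^{-1}(\gamma_2-S_1)\le (2t-1)\,(\gamma_1+S_2)^{-1}(\gamma_2-S_2)
\]
in the sense of quadratic forms. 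I regard this reformulation as the decisive step, because it replaces the awkward resolvents $(\gamma_2-S_i)^{-1}$ by the unconditionally well-behaved $(\gamma_1+S_i)^{-1}$.

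Next I would introduce the scalar function $k(x)=(\gamma_2-x)/(\gamma_1+x)=-1+(\gamma_1+\gamma_2)(\gamma_1+x)^{-1}$, so that the reformulated inequality reads $k(S_1)\le(2t-1)k(S_2)$. From the upper bound in \meqref{A-1}, $S_2\le tS_1$, i.e.\ $S_1\ge t^{-1}S_2$, hence $\gamma_1+S_1\ge\gamma_1+t^{-1}S_2>0$. Antitonicity of the operator inverse then gives $(\gamma_1+S_1)^{-1}\le(\gamma_1+t^{-1}S_2)^{-1}$, that is, $k(S_1)\le k(t^{-1}S_2)$. It therefore suffices to prove the single-operator inequality
\[
   k(t^{-1}S_2)\le (2t-1)\,k(S_2).
\]

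Since both sides are functions of the one operator $S_2$, by the spectral theorem this reduces to the scalar inequality $k(\mu/t)\le(2t-1)k(\mu)$ for every $\mu\in\sigma(S_2)$, where \meqref{A-2} forces $\gamma_1\le\underline{\lambda}_2\le\mu\le\overline{\lambda}_2\le\gamma_2/3$. Clearing the strictly positive denominators, this becomes $(t\gamma_2-\mu)(\gamma_1+\mu)\le(2t-1)(\gamma_2-\mu)(t\gamma_1+\mu)$, and a direct expansion shows the difference of the right and left sides equals $(t-1)[\,2t\gamma_1\gamma_2+\gamma_2\mu-(2t+1)\gamma_1\mu-2\mu^2\,]$. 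Because $t\ge1$ and, using $\mu\le\gamma_2/3$, the bracket is bounded below by $\tfrac{4t-1}{3}\gamma_1\gamma_2+\tfrac13\gamma_2\mu>0$, the scalar inequality holds. Chaining $k(S_1)\le k(t^{-1}S_2)\le(2t-1)k(S_2)$ and passing back through the inversion of the first paragraph proves \meqref{b1}.

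I expect the only genuine obstacle to be locating the correct formulation, namely realizing that one must invert \emph{before} invoking monotonicity: the direct route through $(\gamma_2-S_i)^{-1}$ breaks down precisely on the positivity of $\gamma_2-tS_1$, whereas the $(\gamma_1+S_i)^{-1}$ picture needs only $\gamma_1+t^{-1}S_2>0$, which is automatic. After that, everything is elementary operator-inverse antitonicity together with a single scalar estimate, whose positivity is supplied exactly by the spectral bound $\overline{\lambda}_i\le\gamma_2/3$ encoded in \meqref{A-2}; note also that only the constant $t$ from \meqref{A-1}, and not $s$, enters, consistent with the statement.
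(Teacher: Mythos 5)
Your argument is correct and follows essentially the same route as the paper's proof: both reduce \eqref{b1}, via antitonicity of the operator inverse (the paper packages this as Lemma~\ref{rmkc1}), to the non-inverted inequality $((\gamma_2-S_1)(\gamma_1+S_1)^{-1}v,v)\le(2t-1)\,((\gamma_2-S_2)(\gamma_1+S_2)^{-1}v,v)$, and both establish that inequality from the resolvent identity $(\gamma_2-x)(\gamma_1+x)^{-1}=-1+(\gamma_1+\gamma_2)(\gamma_1+x)^{-1}$ combined with $S_2\le tS_1$. The only difference is in the last step and is cosmetic: where you verify the scalar inequality $\frac{t\gamma_2-\mu}{t\gamma_1+\mu}\le(2t-1)\frac{\gamma_2-\mu}{\gamma_1+\mu}$ on $\sigma(S_2)$, the paper splits off a term $(t-1)(v,v)$ and absorbs it using $((\gamma_2-S_2)(\gamma_1+S_2)^{-1}v,v)\ge(v,v)$; both hinge on exactly the same spectral constraints $\gamma_1\le\underline{\lambda}_2$ and $\overline{\lambda}_2\le\gamma_2/3$ from \eqref{A-2}.
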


\begin{proof} By \meqref{A-1} and \meqref{e-A-1},
\a{
   ((\gamma_2-S_1)(\gamma_1+S_1)^{-1}v,v)&=
   ((\gamma_1+\gamma_2)(\gamma_1+S_1)^{-1}v,v)-(v,v)\\
 &\le  t\, ((\gamma_1+\gamma_2)(\gamma_1+S_2)^{-1}v,v)-(v,v) \\
  &= t\, ((\gamma_2-S_2)(\gamma_1+S_2)^{-1}v,v) +(t-1)(v,v).
 }
We bound the second term next. By the assumption  \meqref{A-2},
 \a{ ((\gamma_2-S_2)(\gamma_1+S_2)^{-1}v,v)
	&\ge \frac{\gamma_2- \overline{\lambda}_2 }
          {\gamma_1+\overline{\lambda}_2} (v,v)
	 \ge \frac{2\overline{\lambda}_2 }
          {2\overline{\lambda}_2} (v,v) = (v,v) .
 }
Combining above two inequalities,
\an{\lab{A4}
    ((\gamma_2-S_1)(\gamma_1+S_1)^{-1} v,v)
    &\le ( 2t -1 ) \, ((\gamma_2-S_2)(\gamma_1+S_2)^{-1} v,v).
 }
Applying Lemma \mref{rmkc1}, replacing $S_2$ there
   by $(\gamma_2-S_1)(\gamma_1+S_1)^{-1}$ and
   $S_1$ by $(\gamma_2-S_2)(\gamma_1+S_2)^{-1}$, by \meqref{A4},
  \a{ \frac1{2t-1} ((\gamma_2-S_2)^{-1}(\gamma_1+S_2) v,v)
    &\le   ((\gamma_2-S_1)^{-1}(\gamma_1+S_1) v,v). }
 \meqref{b1} is proved.
\end{proof}

\begin{lemma} \label{lemupt} If assumptions \meqref{A-1}  and  \meqref{A-2}
    hold,
then the spectrum of the SPD operator $\tilde{T}$ is bounded by
\e{\lab{sigma-T}   \sigma(\tilde{T}) \subset (0, 2t-1].}
\end{lemma}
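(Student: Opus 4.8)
The positivity $\sigma(\tilde T)\subset(0,\infty)$ is already in hand from Lemma \mref{t-T-s}, so the entire task is the upper bound: to show that $(\tilde T v,v)\le (2t-1)(v,v)$ for every $v\in V_i|_\Gamma$. The plan is to exploit the factorization \meqref{new-t-T}, $\tilde T=\tilde T_2^{\,T}\tilde T_1\tilde T_2$, which distributes the work onto two manageable pieces: the middle factor $\tilde T_1$, a function of $S_2$ alone, whose spectrum I can read off directly, and the outer factor $\tilde T_2$, whose quadratic form I will transfer from $S_2$ to $S_1$ using Lemma \mref{lemct2}.

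First I would write $(\tilde T v,v)=(\tilde T_1\,\tilde T_2 v,\tilde T_2 v)$ and set $w=\tilde T_2 v$. By \meqref{t-T1}, $\tilde T_1=(S_2-\gamma_1)(\gamma_2+S_2)^{-1}(\gamma_2-S_2)(\gamma_1+S_2)^{-1}$ is a function of the single SPD operator $S_2$, and its eigenvalue at an eigenvalue $\lambda$ of $S_2$ is exactly $\omega(\lambda)$ in the notation of Lemma \mref{lem:fx}. Under \meqref{A-2} one has $\gamma_1\le\underline{\lambda}_2\le\lambda\le\overline{\lambda}_2<\gamma_2$, so both factors of $\omega(\lambda)$ lie in $[0,1)$ and hence $0\le\tilde T_1<I$. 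Consequently $(\tilde T v,v)=(\tilde T_1 w,w)\le (w,w)=(\tilde T_2 v,\tilde T_2 v)$, and it remains only to bound $(\tilde T_2 v,\tilde T_2 v)$ by $(2t-1)(v,v)$.

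For the second step I would introduce the intertwining vector $u=(\gamma_1+S_1)^{-\half}(\gamma_2-S_1)^{\half}v$, so that by \meqref{t-T2} one has $\tilde T_2 v=(\gamma_1+S_2)^{\half}(\gamma_2-S_2)^{-\half}u$. Since the surviving factors are commuting self-adjoint functions of $S_2$, this collapses to $(\tilde T_2 v,\tilde T_2 v)=((\gamma_2-S_2)^{-1}(\gamma_1+S_2)u,u)$. Applying Lemma \mref{lemct2} to this very $u$ gives $((\gamma_2-S_2)^{-1}(\gamma_1+S_2)u,u)\le(2t-1)\,((\gamma_2-S_1)^{-1}(\gamma_1+S_1)u,u)$. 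Finally I would unwind the definition of $u$: because $(\gamma_1+S_1)$ and $(\gamma_2-S_1)$ are commuting functions of $S_1$, the form $((\gamma_2-S_1)^{-1}(\gamma_1+S_1)u,u)$ telescopes exactly to $(v,v)$. Chaining the three inequalities yields $(\tilde T v,v)\le(2t-1)(v,v)$, which combined with Lemma \mref{t-T-s} gives $\sigma(\tilde T)\subset(0,2t-1]$.

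The only genuinely delicate point is the bookkeeping in the last collapse: one must use that $S_1$-functions commute among themselves, and likewise $S_2$-functions, while $S_1$ and $S_2$ themselves need not commute, so each simplification is legitimate only inside its own block. The whole argument hinges on choosing $u$ precisely so that Lemma \mref{lemct2} converts the $S_2$-quadratic form into the corresponding $S_1$-form, after which that $S_1$-form degenerates to the identity; once this alignment is spotted, the remaining computations are routine functional-calculus manipulations.
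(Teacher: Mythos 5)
Your proposal is correct and follows essentially the same route as the paper: you use the factorization $\tilde T=\tilde T_2^{T}\tilde T_1\tilde T_2$, bound the middle factor by the identity via the eigenvalues $\omega(\lambda_{2,j})\in[0,1)$ of $\tilde T_1$, and then apply Lemma \ref{lemct2} to the same auxiliary vector $\tilde v=(\gamma_1+S_1)^{-\half}(\gamma_2-S_1)^{\half}v$ to collapse $(\tilde T_2v,\tilde T_2v)$ to $(2t-1)(v,v)$. The only cosmetic difference is that you identify the eigenvalues of $\tilde T_1$ with the function $\omega$ of Lemma \ref{lem:fx} and pair its factors so each lies in $[0,1)$, whereas the paper bounds the product directly; the substance is identical.
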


\begin{proof}  $\tilde T_1$ is SPD, cf. \meqref{t-T1}.
     The eigenvalues of $\tilde T_1$ are
  \def\lwo{\lambda_{2,j}}
\a{ \tilde \lambda_{j} = \frac{\lwo-\gamma_1}{\gamma_2+\lwo}
	\frac{\gamma_2-\lwo}{\gamma_1+\lwo} , }
 where $\{\lwo\}$ are all eigenvalues of $S_2$.
 By \meqref{A-2},
  \a{  \tilde \lambda_{j} &> \frac{\lwo-\underline{\lambda}_2 }{\gamma_2+\lwo}
	\frac{4 \overline{\lambda}_2-\lwo}{\gamma_1+\lwo} \ge 0, \\
     \tilde \lambda_{j} &< \frac{\lwo }{\gamma_1+\lwo}
	\frac{\overline{\lambda}_2}{\gamma_2+\lwo} <1.
  }
Then, by \meqref{t-T2}, \meqref{new-t-T} and \meqref{b1},
\a{
 0 < (\tilde{T}v,v) & < (\tilde{T}_2v,\tilde{T}_2v)
    =((\gamma_1+S_2)(\gamma_2-S_2)^{-1}
     \tilde v,\tilde v)\\
    &\le (2t-1) ( (\gamma_1+S_1) (\gamma_2-S_1)^{-1} \tilde v,\tilde v)
     = (2t-1) (v,v)	    ,
}where \a{ \tilde v = (\gamma_1+S_1)^{-\frac 12}(\gamma_2-S_1)^{\frac12}v. }
 The proof is completed.
\end{proof}

\begin{theorem} If the assumptions \meqref{A-1} and  \meqref{A-2} hold,
   then the spectrum of DD reduction operator  $R$, defined in \meqref{R},
    is bounded, independent of the grid size $h$:
    \an{\lab{sigma-R}
   \sigma(R) \subset [-\frac {2t-1}{2t+1}, \frac {2t-1}{2t+1}], }
  when $\theta$ is selected by
   \an{\lab{t-s}  \theta= \frac{2t-1}{2t+1}. }
\end{theorem}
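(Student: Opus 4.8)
The plan is to reduce everything to the scalar optimization already solved in Lemma \ref{lem:theta}, after transferring the spectral information about $\tilde T$ from Lemma \ref{lemupt} over to the nonsymmetric operator $T$. The first step is to record that $T$ and $\tilde T$ are similar, and hence share their spectra. Writing $Q=(\gamma_1+S_1)^{-1/2}(\gamma_2-S_1)^{1/2}$, all factors built from $S_1$ commute, so $Q^2=(\gamma_2-S_1)(\gamma_1+S_1)^{-1}$, and a direct reading of \eqref{t-T} gives $\tilde T=Q\,(S_2-\gamma_1)(\gamma_2+S_2)^{-1}\,Q$, whence $Q^{-1}\tilde T Q=(S_2-\gamma_1)(\gamma_2+S_2)^{-1}Q^2=T$ by \eqref{T}. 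Therefore $\sigma(T)=\sigma(\tilde T)$, which by Lemma \ref{lemupt} is \emph{real} and contained in $(0,2t-1]$.

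The second step is to push this through the affine map defining $R$. Since $R=\theta I-(1-\theta)T$ is a degree-one polynomial in $T$, the spectral mapping theorem yields $\sigma(R)=\{\theta-(1-\theta)\mu:\mu\in\sigma(T)\}$; equivalently, $T$ and $R$ share eigenvectors and eigenvalues transform by $\mu\mapsto\theta-(1-\theta)\mu$. With $0<\theta<1$ this map is strictly decreasing in $\mu$, so as $\mu$ ranges over $(0,2t-1]$ its image lies in the interval $[\,\theta-(1-\theta)(2t-1),\,\theta\,)$.

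The final step is to choose $\theta$ so as to balance the two ends of this interval, which is precisely Lemma \ref{lem:theta} with $a=0$ and $b=2t-1$: the quantity $\max\{|\theta|,\,|\theta-(1-\theta)(2t-1)|\}$ is minimized at $\theta_0=\frac{2t-1}{2t+1}$, exactly the value prescribed in \eqref{t-s}, with common value $\frac{2t-1}{2t+1}$. Substituting this $\theta$ gives the upper end $\theta=\frac{2t-1}{2t+1}$ and the lower end $\theta-(1-\theta)(2t-1)=-\frac{2t-1}{2t+1}$, which yields precisely the claimed inclusion \eqref{sigma-R}.

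I expect no serious obstacle remains at this point, since the analytic heavy lifting --- showing $\tilde T$ is SPD and bounding its spectrum --- was already carried out in Lemmas \ref{t-T-s}--\ref{lemupt}. The only point that demands a little care is the similarity argument: one must justify that $\sigma(T)$ is genuinely real, so that the scalar picture of Lemma \ref{lem:theta} applies to an a priori nonsymmetric operator, and that the affine spectral mapping is legitimate even though $T$ need not be self-adjoint. Both follow from the identity $T=Q^{-1}\tilde T Q$ together with the self-adjointness of $\tilde T$.
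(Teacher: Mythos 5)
Your proof is correct and follows essentially the same route as the paper: transfer the spectral bound $\sigma(\tilde T)\subset(0,2t-1]$ from Lemma \ref{lemupt} to $T$ by similarity, map it through $R=\theta I-(1-\theta)T$, and balance the endpoints with $\theta=\frac{2t-1}{2t+1}$ as in Lemma \ref{lem:theta}. The only difference is that you spell out the similarity $T=Q^{-1}\tilde T Q$ and the resulting reality of $\sigma(T)$ explicitly, whereas the paper merely asserts after \eqref{t-T} that the two operators are similar; this is a welcome but minor elaboration, not a different argument.
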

\begin{proof} By \meqref{sigma-T} and \meqref{R},
    \a{
    \sigma(R) \subset  [\theta-(1-\theta)(2t-1), \theta],
    }
 where $t$ is defined in \meqref{A-1}, independent of $h$.
Similar to the idea in Lemma \ref{lem:theta},
   \meqref{sigma-R} follows after 
  we choose the optimal $\theta$ by \meqref{t-s}. 
\end{proof}

\section{Numerical test}

\def\tableOne{5.1}
\def\tableTwo{5.2}
\def\tableTwo{5.3}

For numerical test,  we solve the Poison equation \meqref{laplace}
    on the unit square $[0,1]$.
The exact solution is chosen
  \a{ u(x,y) = 2^6 (x^3-x^4)(y-y^2). }
We choose $x=1/2$ as the domain decomposition interface.
We use $P_1$ conforming finite element on uniform
    criss grids, shown in Figure \mref{u-grid}.

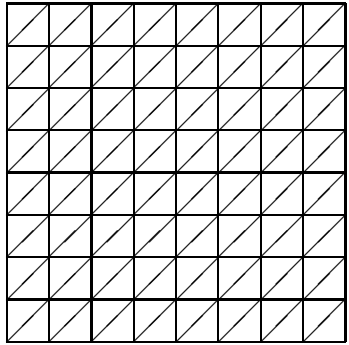
\begin{figure}[htb] \begin{center}\setlength\unitlength{1.6pt}
    \begin{picture}(80,80)(0,0)
  \def\bl{\setlength\unitlength{0.2pt}\begin{picture}(80,80)(0,0)
    \put(0,0){\line(1,0){80}}\put(0,0){\line(0,1){80}}
      \put(80,80){\line(0,-1){80}} \put(80,80){\line(-1,0){80}}
      \put(0,0){\line(1,1){80}} \end{picture}}
   \multiput(0,0)(10,0){8}{\multiput(0,0)(0,10){8}{\bl}}
     \end{picture}\end{center}
\caption{A uniform criss grid of size $h=1/8$.}
\lab{u-grid}
 \end{figure}

First, we do the Robin-Robin iteration (Definition \mref{add})
   for problems with different grid size.
The parameters used
    are $\gamma_1=1$, $\gamma_2=64/h$ and $\theta=3/7$.
The iteration stops when $|g^{m+1}_1 - g^{m}_1|_{l^\infty} < 10^{-11}$.
The number of iteration, the error and the order of convergence
   for the finite element solution are listed in Table \mref{t-1}.
We note that there is a superconvergence for the finite element
   solution in semi-$H^1$ norm.

\begin{table}[htb]
  \caption{ The errors  and the iteration numbers, by
	 Definition \mref{add}.}
\begin{center}  \begin{tabular}{c|rr|rr|rr|r}  
\hline $h$ & $ \|u_I-u_h\|_{L^2}$ &$h^n$ &  $ |u_I-u_h|_{H^1}$ & $h^n$ & \#DD
 \\ \hline
$1/4$  & 0.0027120&      & 0.203663 &      &14 \\
$1/12$ & 0.0000716& 1.65 & 0.004456 & 1.74 &14 \\
$1/20$ & 0.0000098& 1.93 & 0.000605 & 1.95 &14 \\
$1/28$ & 0.0000026& 1.97 & 0.000159 & 1.98 &14 \\
$1/36$ & 0.0000009& 1.99 & 0.000058 & 1.99 &14 \\
$1/44$ & 0.0000004& 1.99 & 0.000026 & 1.99 &14 \\
$1/52$ & 0.0000002& 1.99 & 0.000013 & 2.00 &14 \\
      \hline
\end{tabular} \lab{t-1} \end{center} \end{table}

Next, we check our theoretic bounds in Theorem
    \mref{main}.
In \meqref{t1} and \meqref{t2},  if we vary $\theta$
    from $0/7$ to $6/7$,  we can get the
    following theoretic bounds:
    \a{ \frac 77, \frac 57, \frac 37, \frac 17,
      \frac 5{14}, \frac 8{14}, \frac {11}{14}. }
We compute the real bounds for these $\theta$ on various meshes,
  and list them in Table \ref{table2}.
We note that, when $\theta=0/7=0$,  the method is reduced to
   the traditional Robin-Robin DD method 
  (by other researchers, where $\gamma_1=\gamma_2$),
  which converges at a rate of $1-C\sqrt h$, cf. \cite{XQ}.
This can be seen in the first column of Table \mref{table2}.

 \begin{table}[htb]
  \caption{ The reduction rate with different $\theta$ in
   Definition \mref{add}.}
\begin{center}  \lab{table2}
 \begin{tabular}{c|r|r|r|r|r|r|r}  
\hline $h\setminus \theta$ & $ 0 $ &$1/7$ &
    $2/7$ &   $3/7$ &  $4/7$ &   $5/7$ &
    $6/7$
 \\ \hline
$1/4$  & 0.764& 0.512& 0.260& 0.096& 0.322& 0.548& 0.774 \\
$1/12$ & 0.865& 0.598& 0.332& 0.115& 0.336& 0.557& 0.779 \\
$1/20$ & 0.894& 0.624& 0.353& 0.116& 0.337& 0.558& 0.779 \\
$1/28$ & 0.910& 0.637& 0.364& 0.116& 0.337& 0.558& 0.779 \\
$1/36$ & 0.920& 0.646& 0.371& 0.116& 0.337& 0.558& 0.779 \\
$1/44$ & 0.927& 0.652& 0.377& 0.116& 0.337& 0.558& 0.779 \\
\hline
$1/72$  & 0.943& 0.665& 0.388& 0.116& 0.337& 0.558& 0.779\\
$1/288$ & 0.971& 0.689& 0.408& 0.126& 0.337& 0.558& 0.779\\
$1/1152$& 0.985& 0.702& 0.418& 0.134& 0.337& 0.558& 0.779\\ \hline
Corollary \mref{co}
        & 1.000& 0.714& 0.428& 0.143& 0.357& 0.571& 0.786 \\
      \hline
\end{tabular}\end{center} \end{table}

Finally, we compare the Robin-Robin domain
    decomposition method  with the traditional
    Dirichlet-Neumann domain decomposition method.
We code directly the
    Dirichlet-Neumann domain decomposition method,
  defined as follows.

\begin{definition} \lab{DN}
 (The Dirichlet-Neumann domain decomposition method.)\\
 Given $w^0 (=0)$ on $\Gamma$, find $u^m\in V_1$,
    $u^m|_\Gamma = w^m$:
      \a{  a_1( u^m, v)
       & =  (f, v)_{\Omega_1}
    \quad \forall v\in V_1\cap H^1_0(\Omega_1).
    } Find $\tilde w^{m+1}\in V_2$:
      \a{ a_2(\tilde w^{m+1},v)  = (f, v)_{\Omega_2}
    -a_1(u^m,v)
    \quad \forall v\in V_2,
    } where $v$ is extended into $\Omega_1$ with 0 nodal values.
    Then
      \a{ w^{m+1} =  \theta w^m +
          (1-\theta) \tilde w^{m+1}.
    }
 \end{definition}

In Table 3, we list the number of Dirichlet-Neumann domain decomposition
     iterations for
   the above test problem, for various $\theta$.
It seems that no matter how to choose $\theta$,  the
    Dirichlet-Neumann domain decomposition method
   (21 iterations) is worse than the new Robin-Robin domain
    decomposition method (14 iterations).

 \begin{table}[htb]
  \caption{ The iteration number for Dirichlet-Neumann DD
    (Definition \ref{DN}.)}
\begin{center} \lab{table3}
 \begin{tabular}{c|r|r|r|r|r|r|r|r}  
\hline $h\setminus \theta$ & $ 0 $ &$0.25$ &
    $0.35$ &   $0.4$ &  $0.45$ &   $ 0.5 $ &
    $ 0.55$  &   $ 0.75 $
 \\ \hline
$1/4$   & 88 & 24 & 22 & 25 & 29 & 33 & 38 & 78 \\
$1/12$  &237 & 34 & 21 & 23 & 26 & 30 & 35 & 71 \\
$1/20$  &392 & 37 & 22 & 22 & 25 & 29 & 33 & 68 \\
$1/28$  &548 & 38 & 23 & 21 & 24 & 28 & 32 & 66 \\
$1/36$  &705 & 39 & 23 & 21 & 24 & 27 & 31 & 64 \\
      \hline
\end{tabular}\end{center} \end{table}

\comment{
n=1000; k=sin(pi/4/n)^2; a=4*k^2+4*k; f=sqrt(1+1/a)

t=(2*k+1)*(2*(1+h)-(1-h)*(1-2*k));
s1=sin(pi/(n+1))^2; t1=s1/(a+s1)/(n+1);
2/(n+1)-2*atan(f*tan(pi/(n+1)))/(pi*f)-s1/(a+s1)/(n+1)
t*(1-1/sqrt(1+1/a)+t1)-1+h^2/2

n=112; h=1/(2*n); x=pi/(4*n);
 sb=x^2; ss=x^2-x^4/3;
 sb=sin(x)^2; ss=sb;

(1+3*h*2*(1-h)*sb)/(2*sb+1+sqrt(4*sb^2+4*sb))-1+h^2/2

 left=(3+h/2-2*ss+sb*h)^2*h^2;
 right=(1-h^2/2)^2*4*(ss+1)*ss;
 left-right

n:='n';  h:=1/(2*n); x:=Pi/(4*n);
 sb:=x^2; ss:=x^2-x^4/3;
 left:=(3+h/2-2*ss+sb*h)^2*h^2;
 right:=(1-h^2/2)^2*4*(ss+1)*ss;
g:= expand(-n^10*(left-right));
for i from 0 to 8 do  c[i]:=coeff(g,n,i); evalf(coeff(g,n,i)); od;
n:=2; evalf(n^8*c[8]+n^7*c[7]);
 evalf(n^6*c[6]+n^5*c[5]+n^4*c[4]);
 evalf(n^3*c[3]+ n^1*c[1] );
 evalf( n^2*c[2]+ c[0]);
}

 \bibliographystyle{amsplain}

\begin{thebibliography}{10}


\bibitem{BF} S. Bennethum, and X. Feng,  A domain decomposition method
  for solving a
  Helmholtz-like  problem in elasticity based on the Wilson
  nonconforming element, M$^2$AN, RAIRO, {\bf 31} (1997), 1-25.

\bibitem{chen1} W. Chen, J. Cheng, M. Yamamoto and W. Zhang,
  The monotone Robin-Robin domain
  decomposition methods for the elliptic problems with
  Stefan-Boltzmann conditions,
Commun. Comput. Phys., {\bf 8} (2010), 642-662.

\bibitem{chen2} W. Chen,  M. Gunzburger, F. Hua and X. Wang,
 A Parallel Robin-Robin Domain Decomposition Method for the
  Stokes-Darcy System,
  SIAM J. Numer. Anal., {\bf 49} (2011), 1064-1084.

\bibitem{ciarlet} P. G. Ciarlet, The Finite Element Method for
  Elliptic Problems, North-Holland, Amsterdam, 1978.


\bibitem{Demmel1997} J.W. Demmel, Applied Numerical Linear Algebra,
   SIAM, 1997.

\bibitem{deng2} Q. Deng, An analysis for a nonoverlapping domain
  decomposition iterative procedure, SIAM J. Sci. Comput., {\bf 18}
  (1997), 1517-1525.

\bibitem{deng3}Q. Deng, A nonoverlapping domain decomposition
  method for nonconforming finite element problems, Comm.  Pure Appl.
  Anal., {\bf 2 } (2003), 295 -306.

\bibitem{Despres} B. Despres, Domain decomposition method and
Helmholtz problem, Mathematical and Numerical Aspects of Wave
  Propagation Phenomena, G. Cohen, L. Halpern and P. Joly, eds.,
  Philadelphia, SIAM, 1991, 44-52.

\bibitem{Discacciati} M. Discacciati, An operator-spliting
approach to non-overlapping domain decomposition methods,  Tech.
Report 14, 2004, Section de Mathmatiques, EPFL.

\bibitem{DQV2007}M. Discacciati, A. Quarteroni and A. Valli,
  Robin-Robin domain
  decomposition methods for the Stokes-Darcy coupling, SIAM J. Numer.
  Anal., {\bf 45} (2007), 1246-1268.

\bibitem{DGG}V. Dolean, M.Gander, L. Gerardo-Giorda,
  Optimized Schwarz methods for Maxwells equations, SIAM J. Sci. Comp.,
   {\bf 31} (2009),  2193-2213.

\bibitem{DH} J. Douglas, and C.S. Huang, Accelerated domain
decomposition iterative procedures for mixed methods based on Robin
transmission conditions, Calcolo, {\bf 35} (1998), 131-147.

\bibitem{DH1} J. Douglas, and C.S. Huang, An accelerated domain
decomposition procedures based on Robin transmission conditions,
BIT, {\bf 37}(1997), 678-686.

\bibitem{Du} Q. Du, Optimization based nonoverlapping domain decomposition
  algorithms and their convergence, SIAM J. Numer. Anal.,{\bf 39} (2001),
  1056-1077.

\bibitem{Feng} X. Feng, Analysis of a domain decomposition method
for the nearly elastic wave equations based on mixed finite element
methods, IMA J. Numer. Anal. {\bf 18} (1998), 229-250.

\bibitem{Nataf1}
E. Flauraud, and F. Nataf, Optimized interface conditions in domain
decomposition methods for problems with extreme contrasts in the
coefficients, J. Comp. Appl. Math, {\bf 189} (2006), 539-554.

\bibitem{Gander01} M. J. Gander, Optimized Schwarz methods,
  SIAM J. Numer. Anal., {\bf 44} (2006), 699-731.

\bibitem{Gander1} M. J. Gander, and G. H. Golub, A nonoverlapping optimized
  Schwarz method which converges with an arbitrarily weak dependence
  on $h$, In fourteenth international conference on domain
  decomposition methods, 2002.

\bibitem{Gander02} M. J. Gander and L. Halpern,
   Optimized Schwarz waveform relaxation for
  advection reaction diffusion problems, SIAM J. Numer. Anal., {\bf
  45}(2007), 666-697.


\bibitem{Gander2} M. J. Gander, L. Halpern, and F. Nataf, Optimized
Schwarz methods, In Twelfth International Conference on Domain
Decomposition Methods, Chiba, Japan, 15-28, 2001.

\bibitem{Gander3} M. J. Gander, F. Magoules, and F. Nataf,
Optimized Schwarz methods without overlap for the Helmholtz
equation, SIAM J. Sci. Comput., {\bf 24} (2002), 38-60.

\bibitem{GH} W. Guo, and L.S. Hou, Generalization and accelerations
  of Lions' nonoverlapping domain decomposition method for linear
  elliptic PDE, SIAM J. Numer. Anal., {\bf 41} (2003), 2056-2080.

\bibitem{Greenbaum1997} A. Greenbaum, Iterative Methods for
  Solving Linear Systems, SIAM, 1997.

\bibitem{lions} P. L. Lions, On the Schwarz alternating method III: a
  variant for nonoverlapping subdomains, in third international
  symposium on domain decomposition methods for partial differential
  equations, T. F. Chan, R. Glowinski, J. Perianx, and O. B. Widlund,
  eds., SIAM, Philadelphia, PA, 1990, 202-223.

\bibitem{Lui1} S. Lui, A Lions non-overlapping domain decomposition method
  for domains with an
 arbitrary interfaces, IMA J. Numer. Anal., {\bf 29} (2009), 332-349.


\bibitem{QX1} L. Qin, and X. Xu, On a parallel Robin-type
   nonoverlapping domain decomposition
   method, SIAM J. Numer. Anal., {\bf 44} (2006), 2539-2558.

\bibitem{QX2} L. Qin, Z. Shi and X. Xu, On the convergence rate of a
parallel nonoverlapping domain decomposition method, Sciences in China,
Series A: Mathematics, {\bf 51} (2008), 1461-1478.

\bibitem{QX3} L. Qin, and X. Xu, Optimized Schwarz methods
  with Robin transmission
  conditions for parabolic problems,
 SIAM J. Sci. Comput., {\bf 31} (2008), 608-623.

\bibitem{QV} A. Quarteroni, and A. Valli, Domain Decomposition Methods
   for Partial Differential
   Equations, Oxford Science Publications, 1999.

\bibitem{XQ} X. Xu, and L. Qin,
  Spectral analysis of DN operators and optimized Schwarz methods
   with Robin transmission
  conditions, SIAM J. Numer. Anal., {\bf 47} (2010), 4540-4568.

\end{thebibliography}

\end{document}